\theoremstyle{definition}
\newtheorem{dfn}{Definition}[section]
\newtheorem{thm}[dfn]{Theorem}
\newtheorem{lem}[dfn]{Lemma}
\newtheorem{cor}[dfn]{Corollary}
\newtheorem{prop}[dfn]{Proposition}
\newtheorem{conj}[dfn]{Conjecture}
\newtheorem{ex}[dfn]{Example}
\theoremstyle{remark}
\newtheorem{rem}[dfn]{Remark}
\newcommand{\lrangle}[1]{\langle #1 \rangle}
\newcommand{\Bxpm}{\mathbb B[{\bm x}^{\pm}]}
\newcommand{\Bxpmf}{\mathbb B[{\bm x}^{\pm}]_\mathrm{fcn}}
\newcommand{\Bypmf}{\mathbb B[{\bm y}^{\pm}]_\mathrm{fcn}}
\newcommand{\ZApos}{\mathbb Z^A_{\mathrm{pos}}}
\newcommand{\ZAposu}{\mathbb Z^A_{\mathrm{pos}} \cup \{ -\infty \}}
\newcommand{\ZBposu}{\mathbb Z^B_{\mathrm{pos}} \cup \{ -\infty \}}
\newcommand{\ZXposu}{\mathbb Z^{X(1)}_{\mathrm{pos}} \cup \{ -\infty \}}
\newcommand{\ZYposu}{\mathbb Z^{Y(1)}_{\mathrm{pos}} \cup \{ -\infty \}}
\newcommand{\Txf}{\mathbb T[\bm x]_{\mathrm{fcn}}}
\newcommand{\Txpmf}{\mathbb T[\bm x^{\pm}]_{\mathrm{fcn}}}
\newcommand{\Ker}{\mathrm {Ker}}
\renewcommand{\Im}{\mathrm{Im}}
\newcommand{\norm}[1]{\left\lVert #1 \right\rVert}
\newcommand{\matba}[2]{\begin{pmatrix} #1 \\ #2 \end{pmatrix}}
\newcommand{\matca}[3]{\begin{pmatrix} #1 \\ #2 \\ #3 \end{pmatrix}}
\newcommand{\matbc}[6]{\begin{pmatrix} #1 & #2 & #3 \\ #4 & #5 & #6 \end{pmatrix}}
\newcommand{\matcb}[6]{\begin{pmatrix} #1 & #2 \\ #3 & #4 \\ #5 & #6 \end{pmatrix}}
\newcommand{\matcc}[9]{\begin{pmatrix} #1 & #2 & #3 \\ #4 & #5 & #6 \\ #7 & #8 & #9 \end{pmatrix}}
\begin{document}

\title[Boolean function semirings on 1-dimensional tropical fans]{Homomorphisms between the semirings of Boolean functions on 1-dimensional tropical fans}
\author{Takaaki Ito}
\date{}
\address{Department of Mathematical sciences, Tokyo Metropolitan University, 1-1 Minami-Ohsawa, Hachioji-shi Tokyo, 192-0397, Japan}
\email{ito-t@tmu.ac.jp}
\keywords{max-plus algebra, tropical algebra, tropical curves, local theory}
\subjclass[2020]{Primary 14T10, Secondary 15A80, 16Y60.}

\begin{abstract}
  In \cite{ito2022local}, we stated the conjecture that any semiring homomorphism between the Boolean function semirings on 1-dimensional tropical fans has the property called \textit{geometric}.
  In this paper, we show that the conjecture is true.
  As an application, we establish a way to find all the morphisms between given 1-dimensional tropical fans.
\end{abstract}

\maketitle

\section{Introduction}
%In a classical algebraic geometry, an ideal $ito2022local \subset K[x_1, \ldots, x_n]$ gives the algebraic set $V(ito2022local) \subset \mathbb K^n$.
%However, in tropical geometry, the analogous consideration leads an unexpected result.
%Let $\mathbb T = (\mathbb R \cup {-\infty}, \oplus, \odot)$ be the tropical semifield, where
%$$a \oplus b = \max\{ a,b \}, \qquad a \odot b = a+b$$
%for any $a,b \in \mathbb T$.
%For a congruence $\mathbf E$ on the tropical polynomial semiring $\mathbb T[x_1, \ldots, x_n]$, 

In classical algebraic geometry, the correspondence between radical ideals of a polynomial rings and Zariski closed subsets in an affine space is a basic result.
Recently, the tropical version of this correspondence is studied.
Let $\mathbb T = (\mathbb R \cup \{ -\infty \}, \oplus, \odot)$ be the tropical semifield, where $\oplus$ and $\odot$ are the max operation and the standard addition respectively.
%, i.e. let $\mathbb T$ be  $\mathbb R \cup \{ -\infty \}$ as a set and endowed with the max-plus operations.
Let $\Txf = \mathbb T[x_1,\ldots, x_n]_{\mathrm{fcn}}$ (resp. $\Txpmf = \mathbb T[x_1^{\pm},\ldots, x_n^{\pm}]_{\mathrm{fcn}}$) be the semiring of functions on $\mathbb T^n$ (resp. $\mathbb R^n$) defined by polynomials (resp. Laurent polynomials) over $\mathbb T$.
For a congruence $E$ on $\Txf$ (resp. $\Txpmf$), we define the subset $\mathbf V(E)$ of $\mathbb T^n$ (resp. $\mathbb R^n$) as follows:
$$\mathbf V(E) := \{ \bm p \ | \ f(\bm p) = g(\bm p) \text{ for any } (f,g) \in E \},$$
which is called the \textit{congruence variety} associated to $E$.
Conversely, for a subset $Z$ in $\mathbb T^n$ (resp. $\mathbb R^n$), we define the congruence $\mathbf E(Z)$ on $\Txf$ (resp. $\Txpmf$) as follows:
$$\mathbf E(Z) = \{ (f,g) \ | \ f|_Z = g|_Z \}.$$
%Recently, these correspondences are studied.
%It is known that the "tropical Zariski topology" is the Euclidean topology. See \cite[Lemma 3.7.4]{GG}.
%Hence, for a subset $Z \subset \mathbb T^n$, $\mathbf V(\mathbf E(Z))$ is the closure of $Z$ with respect to the Euclidean topology, where we naturally extends the Euclidean topology on $\mathbb R^n$ to $\mathbb T^n$.
It is known that for a subset $Z \subset \mathbb T^n$, $\mathbf V(\mathbf E(Z))$ is the closure of $Z$ with respect to the Euclidean topology, see \cite[Lemma 3.7.4]{giansiracusa2022universal}.
Conversely, for a finite generated congruence $E$ on $\Txf$, the structure of $\mathbf E(\mathbf V(E))$ is studied by Bertram and Easton in \cite{bertram2017tropical}, and by Jo\'o and Mincheva in \cite{joo2018prime}.
Another related result is in \cite{song2024congruences} by Song, which studies a relation between congruences on the tropical rational function semifield and their congruence varieties.

Note that for an arbitrary congruence $E$ on $\Txf$ or $\Txpmf$, the congruence variety $\mathbf V(E)$ is not always the support of a \textit{tropical variety} (see \cite{allermann2010first} for the definition).
Thus it is natural to consider the following question: When is $\mathbf V(E)$ the support of a tropical variety?
Partial answers are given by $E = \mathbf E(|X|)$ for a tropical variety $X$, where $|X|$ is the support of $X$.
Hence it is important to study properties of $\mathbf E(|X|)$ for a tropical variety $X$.

In \cite{ito2022local}, the author studied the localized version of them, namely, studied the structure of the congruence of the form
$$\mathbf E(|X|)_{\bm p} := \left\{ (f,g) \ \middle| \ \begin{aligned} &\text{ there exists an open neighborhood $U$ of $\bm p$ } \\ &\text{ such that $f|_{U \cap |X|} = g|_{U \cap |X|}$ } \end{aligned} \right\}$$
on $\Txpmf$ for a tropical variety $X$ in $\mathbb R^n$ and a point $\bm p \in |X|$.
Then it was shown that studying such congruences is almost equivalent to studying the congruences
$$\mathbf E(|X|)_{\mathbb B} := \{ (f,g) \in \Bxpmf^2 \ | \ f|_{|X|} = g|_{|X|} \}$$
for a \textit{tropical fan} $X$ in $\mathbb R^n$, where $\mathbb B$ is the subsemifield $\{ -\infty, 0 \}$ of $\mathbb T$, which is called the \textit{Boolean semifield}, and a tropical fan is a tropical variety which is also a fan.
The semiring $\Bxpmf / \mathbf E(X)_{\mathbb B}$ is called the \textit{Boolean function semiring} on $X$.

Moreover, in \cite{ito2022local}, the case that $X$ is a 1-dimensional tropical fan is studied in more detail.
For a 1-dimensional tropical fan $X$ in $\mathbb R^n$, the author showed that $\mathbf E(|X|)_{\mathbb B}$ is the kernel of a certain semiring homomorphism $\varphi_X$ called the \textit{weighted evaluation map} of $X$.
%See Section 2 for the definition of $\varphi_X$.
%$$\varphi_X : \mathbb B[x_1^{\pm}, \ldots, x_n^{\pm}]_{\mathrm{fcn}} \to \ZXposu$$
%called the \textit{weighted evaluation map} of $X$, where the definition of $\ZXposu$ will be in subsection \ref{pre B and Z}.
%The kernel of $\varphi_X$ is $\mathbf E(|X|)_{\mathbb B}$.
It was also shown that the correspondence $X \mapsto \varphi_X$ gives a faithful contravariant functor, and the author stated the conjecture that the functor is full.

The purpose of this paper is to show that the conjecture is true.
As we will see in Section 2, it is sufficient to show the following, which is our main theorem.

\begin{thm}
  Let $A,B$ be finite sets, $R$ a Laurent-generated subsemiring of $\ZAposu$, and $\nu : R \to \ZBposu$ a semiring homomorphism.
  Then $\nu$ is geometric.
\end{thm}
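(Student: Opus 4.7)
The plan is to decompose $\nu$ into its $B$-components and classify each component as a weighted coordinate evaluation. Since $\mathbb Z^B_{\mathrm{pos}} \cup \{-\infty\}$ carries coordinatewise max and addition, $\nu$ is determined by the family of semiring homomorphisms $\nu_b : R \to \mathbb Z_{\mathrm{pos}} \cup \{-\infty\}$ given by $\nu_b(r) = \nu(r)(b)$ for $b \in B$. Assuming, as the setup from \cite{ito2022local} suggests, that the property \textit{geometric} is coordinatewise in $B$, the task reduces to the single-target case: show that any semiring homomorphism $\mu : R \to \mathbb Z_{\mathrm{pos}} \cup \{-\infty\}$ is of the form $r \mapsto k \cdot r(a)$ for some $a \in A$ and $k \in \mathbb Z_{\geq 0}$ (with the convention $k \cdot (-\infty) = -\infty$).

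To pin down the coordinate $a$, I would first analyze the preimage $\mu^{-1}(-\infty) \subset R$. This subset is closed under multiplication and is an absorbing ideal for the additive structure, so it behaves analogously to a prime ideal in classical algebra. On the ambient $\mathbb Z^A_{\mathrm{pos}} \cup \{-\infty\}$, every semiring homomorphism to $\mathbb Z_{\mathrm{pos}} \cup \{-\infty\}$ has $-\infty$-preimage equal to $\{ r : r(a) = -\infty\}$ for a single coordinate $a$; the goal is to transfer this rigidity to the Laurent-generated subsemiring $R$. The role of the Laurent-generated hypothesis is to guarantee that $R$ contains enough generators that mimic coordinate indicators to distinguish between the $a \in A$: these generators should separate points of $A$ in a semiring-theoretic sense, so that the preimage $\mu^{-1}(-\infty)$ still picks out a single $a$. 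Once such an $a$ is identified, the weight $k$ is read off by evaluating $\mu$ on a single distinguished generator.

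The main obstacle is ruling out \emph{mixed} homomorphisms such as $\mu(r) = \max_i (k_i \cdot r(a_i))$ over several distinct $a_i \in A$. On the full $\mathbb Z^A_{\mathrm{pos}} \cup \{-\infty\}$, such maps fail to be multiplicative except in degenerate cases, but on a restricted subsemiring $R$ the obstruction is weaker and must be reconstructed from the specific Laurent generators. I expect the crux of the proof to be a construction showing that, given two candidate coordinates $a_1, a_2$ with $\mu$ ``seeing both'', one can exhibit Laurent generators $f, g \in R$ whose supports at $a_1, a_2$ produce incompatible images, violating $\mu(f \odot g) = \mu(f) \odot \mu(g)$ or $\mu(f \oplus g) = \mu(f) \oplus \mu(g)$. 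With the single-coordinate factorization of each $\nu_b$ in hand, the assembled data $\{ (a_b, k_b) \}_{b \in B}$ yields the desired geometric description of $\nu$.
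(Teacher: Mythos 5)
Your coordinatewise reduction in $B$ is correct and in fact mirrors what the paper implicitly does: the geometric condition is per-$b$, and the map $r \mapsto \nu(r)(b)$ is a semiring homomorphism (though note its target is $\mathbb Z \cup \{-\infty\}$ with max-plus, not $\mathbb Z_{\mathrm{pos}} \cup \{-\infty\}$, since the $b$-coordinate of $\nu(r)$ can certainly be negative; likewise the scalar $t$ in the conclusion is a nonnegative \emph{rational}, not an integer). The real problem is the next step: your plan to locate the coordinate $a$ by analyzing $\mu^{-1}(-\infty)$ collapses immediately. Since $R$ is Laurent-generated, every nonzero element of $R$ is a finite $\oplus$-sum of products of the invertible generators $F_i^{\pm 1}$, each summand of which is a unit of $\ZAposu$ and hence lies in $\mathbb Z^A_0$; any semiring homomorphism sends units to units, so no such element maps to $-\infty$. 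Consequently $\mu^{-1}(-\infty) = \{-\infty\}$, which carries no information about $a$ whatsoever. The Laurent-generated hypothesis is not there to provide ``coordinate indicators''; on the contrary, it excludes from $R$ exactly the kind of elements (supported on a single coordinate) that would make the prime-ideal-style argument work.

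The missing idea is the geometric Nullstellensatz that the paper builds. After translating $\nu$ through the substitution homomorphism $\varphi : f \mapsto f(F_1, \ldots, F_n)$, one finds that for each $b$ the vector $\bigl(\nu(F_1)(b), \ldots, \nu(F_n)(b)\bigr)$ must lie in the congruence variety $\mathbf V\bigl(\mathbf E(X_{\bm F})_{\mathbb B}\bigr)$, where $X_{\bm F}$ is the 1-dimensional fan with rays spanned by the columns $\bigl(F_1(a), \ldots, F_n(a)\bigr)$. Geometricity of $\nu$ is then precisely the equality $\mathbf V\bigl(\mathbf E(X_{\bm F})_{\mathbb B}\bigr) = |X_{\bm F}|$. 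Proving that equality is the crux, and it is genuinely analytic: the paper separates any ray $\rho$ not in the fan from a conical neighborhood of the fan by constructing two Boolean Laurent polynomial functions that agree off the cone $U(\rho, \varepsilon)$ but differ along $\rho$ (Lemma \ref{key 2}), using a compactness/continuity argument to push a real separating hyperplane arrangement to integer data. Your proposal gestures at ``exhibiting Laurent generators whose supports produce incompatible images,'' but offers no mechanism for producing the needed relations $(f,g) \in \mathbf E(X_{\bm F})_{\mathbb B}$, and the $\mu^{-1}(-\infty)$ route you rely on to narrow down the candidate $a$'s is unavailable. Without a replacement for this separating construction the argument does not close.
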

See Section 2 for the definitions of Laurent-generated subsemirings, $\ZApos$, and geometric homomorphisms.
In the proof of the main theorem, we also obtain the following result.

\begin{thm}
  Let $Z$ be a nonempty closed subset of $\mathbb R^n$ which satisfies $\mathbb R_{\geq 0} Z = Z$, where $\mathbb R_{\geq 0} Z = \{ t \bm p \in \mathbb R^n \ | \ t \in \mathbb R_{\geq 0}, \bm p \in Z \}$.
  Then $\mathbf V(\mathbf E(Z)_{\mathbb B}) = Z$.
\end{thm}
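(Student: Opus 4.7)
The inclusion $Z \subseteq \mathbf{V}(\mathbf{E}(Z)_{\mathbb{B}})$ is immediate from the definitions, so the content is the reverse inclusion. Given $\bm p \in \mathbb R^n \setminus Z$, my plan is to construct a single Boolean Laurent polynomial function $h \in \Bxpmf$ with $h|_Z \geq 0$ and $h(\bm p) < 0$. Once such an $h$ is produced, the pair
$$f := h \oplus 0, \qquad g := h$$
lies in $\mathbf{E}(Z)_{\mathbb{B}}$, since $\max(h(\bm z),0) = h(\bm z)$ whenever $h(\bm z) \geq 0$, while $f(\bm p) = 0 \neq h(\bm p) = g(\bm p)$ witnesses $\bm p \notin \mathbf{V}(\mathbf{E}(Z)_{\mathbb{B}})$.

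Every element of $\Bxpmf$ is a pointwise maximum of finitely many integer linear functionals $\bm x \mapsto \bm b \cdot \bm x$, so constructing $h$ reduces to producing integer vectors $\bm b_1, \dots, \bm b_k$ such that $\bm b_i \cdot \bm p < 0$ for every $i$ and the closed half-spaces $H_i^+ := \{\bm x : \bm b_i \cdot \bm x \geq 0\}$ cover $Z$. The hypothesis $\mathbb R_{\geq 0} Z = Z$ forces $0 \in Z$ (hence $\bm p \neq 0$) and rules out $\bm p \in \mathbb R_{>0}\, \bm z$ for any $\bm z \in Z$, since otherwise $\bm p \in Z$. This positivity obstruction is exactly what makes the separation problem solvable: for each $\bm z \in Z \cap S^{n-1}$, the open convex cone
$$C_{\bm z} := \{\bm b \in \mathbb R^n : \bm b \cdot \bm z > 0,\ \bm b \cdot \bm p < 0\}$$
is nonempty---in the linearly independent case by a dimension count, and in the linearly dependent case $\bm p = \lambda \bm z$ (necessarily with $\lambda < 0$) by taking $\bm b = \bm z$. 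Density of $\mathbb Q^n$ in $\mathbb R^n$ together with positive scaling then yields an integer representative $\bm b_{\bm z} \in C_{\bm z}$.

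The open half-spaces $\{\bm b_{\bm z} \cdot \bm x > 0\}$ form an open cover of the compact set $Z \cap S^{n-1}$, from which I extract a finite subcover indexed by $\bm b_1, \dots, \bm b_k$; the $\mathbb R_{\geq 0}$-invariance of $Z$ then propagates the covering by the $H_i^+$ from $Z \cap S^{n-1}$ to all of $Z$, the origin being absorbed since $\bm b_i \cdot 0 = 0$. Setting $h(\bm x) := \max_i (\bm b_i \cdot \bm x)$ gives the desired function: $h|_Z \geq 0$ by the covering property and $h(\bm p) < 0$ because $\bm b_i \cdot \bm p < 0$ for every $i$. The step I expect to be most delicate is the integrality upgrade from real $\bm b$ to integer $\bm b_{\bm z}$, but this is routine thanks to the openness of the strict inequalities defining $C_{\bm z}$; the remaining ingredients are standard convex separation and compactness, both of which rely essentially on the closedness of $Z$ and the cone hypothesis.
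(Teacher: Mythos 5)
Your argument is correct, and it is a genuinely different route from the paper's. The paper factors the reverse inclusion through two lemmas: first it shows (Lemma~\ref{open cone nbd}) that since $Z$ is a closed cone missing $\bm p$, some open conical neighborhood $U(\rho,\varepsilon)$ of the ray $\rho$ through $\bm p$ is disjoint from $Z$; then (Lemma~\ref{key 2}) it constructs, for any such $\rho$ and $\varepsilon$, a pair $f = \overline{\bigoplus_i \bm x^{\pm N\bm e_i}}$ and $g = \overline{\bm x^{N\bm e_1}} \oplus f$ that agree on the complement of $U(\rho,\varepsilon)$ but differ on $\rho$ --- a ``local bump'' built from an orthogonal basis at $\bm d_1$, a scaling constant $K$, and a careful rational approximation of those vectors. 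You instead separate $\bm p$ from $Z$ \emph{globally}: for each $\bm z \in Z \cap S^{n-1}$ you pick an integer covector $\bm b_{\bm z}$ with $\bm b_{\bm z}\cdot\bm z > 0 > \bm b_{\bm z}\cdot\bm p$ (possible exactly because $\bm p \notin \mathbb R_{>0}\bm z$), use compactness of $Z\cap S^{n-1}$ to extract a finite subcover, and set $h = \max_i \bm b_i\cdot\bm x$; the pair $(h\oplus 0,\, h)$ then witnesses $\bm p \notin \mathbf V(\mathbf E(Z)_{\mathbb B})$. Your construction is shorter and avoids the paper's most technical step (the uniform constant $K$ and the perturbation to rational vectors via the continuity of $\mathcal H$ in Lemma~\ref{key 2}); what the paper's route buys is a local statement about a single ray and cone neighborhood, independent of $Z$, which is arguably of standalone interest.

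One small edge case you should flag explicitly: if $Z = \{\mathbf 0\}$ then $Z\cap S^{n-1}=\emptyset$ and the finite subcover is vacuous, so $h = \max_i \bm b_i\cdot\bm x$ is an empty maximum (the $-\infty$ function), which does not satisfy $h(\mathbf 0)\geq 0$. In this case one simply picks a single integer $\bm b$ with $\bm b\cdot\bm p < 0$ and takes $h(\bm x) = \bm b\cdot\bm x$, for which $h(\mathbf 0) = 0$; the rest of the argument goes through unchanged.
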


Moreover, as an application of the main theorem, we establish a way to find all the semiring homomorphism between the Boolean function semirings of given 1-dimensional tropical fans.
This means that we can also find all the morphisms between given 1-dimensional tropical fans.

This paper is organized as follows.
In Section 2, we see some basic notions of semirings, some basic properties of the Boolean Laurent polynomial function semiring, the definition of the semiring $\ZAposu$, and some properties of them.
Moreover, we briefly review the result of \cite{ito2022local} and see why our main theorem implies that the functor $X \mapsto \varphi_X$ is full.
In Section 3, we show our main theorem.
In Section 4, we give a way to find all the semiring homomorphism between the Boolean function semirings of given 1-dimensional tropical fans.
We see some examples.
\section{Preliminary}

\subsection{Semirings}

%In this paper, semirings are always commutative and has the multiplicative identity element.
%Thus, a \textit{semiring} is a set $R$ endowed with two binary operations $+$ and $\cdot$ and two fixed elements $0$ and $1$ such that
A \textit{semiring} is a set $R$ endowed with two binary operations $+$ and $\cdot$ and two fixed elements $0$ and $1$ such that
\begin{enumerate}[(1)]
\item $(R, +)$ is a commutative monoid with identity element 0,
\item $(R, \cdot)$ is a commutative monoid with identity element 1,
\item $a(b+c) = ab + ac$ for any $a,b, c \in R$, and
\item $0 \cdot a = 0$ for any $a \in R$,
\end{enumerate}
where $ab$ means $a \cdot b$.
An element $a$ of a semiring $R$ is \textit{invertible} if there exists $b \in R$ such that $ab=1$.
The set of invertible elements of $R$ is denoted by $R^{\times}$, which forms an abelian group with respect to the multiplication.
We call $R^{\times}$ the \textit{unit group} of $R$.
A semiring $R$ is called a \textit{semifield} if $0 \neq 1$ and $R^{\times} = R \setminus \{ 0 \}$.

Let $R$ be a semiring.
A subset $E \subset R \times R$ is a \textit{congruence} on $R$ if 
\begin{enumerate}[(1)]
  \item $(a,a) \in E$ for any $a \in R$,
  \item if $(a,b) \in E$, $(b,a) \in E$,
  \item if $(a,b),(b,c) \in E$, $(a,c) \in E$,
  \item if $(a,b),(c,d) \in E$, $(a+c,b+d) \in E$, and
  \item if $(a,b),(c,d) \in E$, $(ac,bd) \in E$.
\end{enumerate} 

Any congruence $E$ on $R$ is an equivalence relation on $R$.
Moreover, the quotient set $R/E$ naturally has a semiring structure with respect to the operations
$$[a]+[b] = [a+b], \qquad [a]\cdot[b] = [ab],$$
where $[a]$ means the equivalence class of $a$.
The semiring $R/E$ is called the \textit{quotient semiring} of $R$ by $E$.

A subset $R'$ of a semiring $R$ is a \textit{subsemiring} of $R$ if $R'$ is itself a semiring with respect to the operations same to $R$.
For a subset $S \subset R$, the subsemiring of $R$ \textit{generated by} $S$ is the smallest subsemiring of $R$ containing $S$, which is denoted by $\lrangle{S}$.
Explicitly, $\lrangle{S}$ consists of the elements of the form
$$a_{11}a_{12} \cdots a_{1n_1} + \cdots + a_{m1}a_{m2} \cdots a_{mn_m},$$
where each $a_{ij}$ is in $S$.

Let $R_1, R_2$ be semirings.
A map $\varphi : R_1 \to R_2$ is a \textit{semiring homomorphism} if
\begin{enumerate}[(1)]
\item $\varphi (a+b) = \varphi (a)+\varphi (b)$ for any $a,b \in R_1$,
\item $\varphi(ab) = \varphi(a)\varphi(b)$ for any $a,b \in R_1$, 
\item $\varphi(0)=0$, and $\varphi(1) = 1$. 
\end{enumerate}

A bijective semiring homomorphism is called a \textit{semiring isomorphism}.
Two semirings $R_1, R_2$ are \textit{isomorphic} if there exists a semiring isomorphism from $R_1$ to $R_2$.

For a semiring homomorphism $\varphi : R_1 \to R_2$ , the \textit{kernel} of $\varphi$ is defined as
$$\Ker (\varphi) := \{ (f,g) \in R_1 \times R_1 \ | \ \varphi(f)=\varphi(g) \},$$
which is a congruence on $R_1$.
The \textit{image} of $\varphi$ is defined as
$$\Im (\varphi) := \{ \varphi(f) \in R_2 \ | \ f \in R_1 \},$$
which is a subsemiring of $R_2$.
$R_1/\Ker (\varphi)$ and $\Im (\varphi)$ are naturally isomorphic.

Let $R$ be a semiring.
An $R$\textit{-algebra} is a semiring $S$ endowed with a semiring homomorphism $R \to S$.
For two $R$-algebras $S,T$, a semiring homomorphism $\varphi : S \to T$ is a \textit{homomorphism of $R$-algebra} if the following diagram is commutative:
$$\xymatrix{R \ar[d] \ar[rd] & \\ S \ar[r]_{\varphi} & T}$$

%Let $\{ I_{\alpha} \}_{\alpha \in A}$ be a family of congruences of a semiring $R$.
%Then the intersection $\cap_{\alpha \in A} I_{\alpha}$ is also a congruence on $R$.

%For any subset $\Lambda$ of $R \times R$, let $\langle \Lambda \rangle$ be the intersection of all congruences on $R$ containing $\Lambda$.
%Then $\langle \Lambda \rangle$ is the smallest congruence on $R$ containing $\Lambda$, which is called the congruence \textit{generated by} $\Lambda$.

Let $n$ be a positive integer.
%In the rest of this section, we fix a positive integer $n$.
For a semiring $R$, the \textit{Laurent polynomial semiring} $R[x_1^{\pm}, \ldots, x_n^{\pm}]$ over $R$ is naturally defined, which is an $R$-algebra.
For an $R$-algebra $S$ and any invertible elements $a_1, \ldots, a_n \in S^{\times}$, the map
$$\varphi : R[x_1^{\pm}, \ldots, x_n^{\pm}] \to S, \qquad \varphi(f) = f(a_1, \ldots, a_n)$$
is a homomorphism of $R$-algebra.

A semiring $R$ is (\textit{additively}) \textit{idempotent} if $a+a=a$ for any $a \in R$.
%In this paper, “idempotent” always means additively idempotent.
%If $R$ is an idempotent semiring, $R$ has the following order.
%$$a \leq b \Longleftrightarrow a+b = b.$$
%In this paper, the order on an idempotent semiring is always this one unless otherwise mentioned

\subsection{Boolean Laurent polynomial function semirings}
%\subsection{Boolean Laurent polynomial semirings and the semiring $\ZAposu$}
\label{pre B and Z}

Let $\mathbb B := \{ 0, -\infty\}$.
Then $\mathbb B$ forms an idempotent semifield with respect to the addition
$$a \oplus b = \max\{a,b\},$$
and the multiplication
$$a \odot b = a+b.$$
Thus $\mathbb B$ is called the \textit{Boolean semifield}.
The semiring of the form $\mathbb B[{x_1}^{\pm}, \ldots, x_n^{\pm}]$ for some $n$ is called the \textit{Boolean Laurent polynomial semiring}.
For an integer vector $\bm u = (u_1, \ldots, u_n) \in \mathbb Z^n$, we denote by $\bm x^{\bm u} = x_1^{u_1} \cdots x_n^{u_n}$.
A Boolean Laurent polynomial can be written as
$$\bigoplus_{i=1}^{m} \bm x^{\bm u_i}$$
for some $\bm u_1, \ldots, \bm u_m \in \mathbb Z^n$.
Note that the omitted coefficients are 0 since $0$ is the multiplicative identity element of $\mathbb B$.

In the rest of this section, we fix a positive integer $n$ and denote $\mathbb B[{\bm x}^{\pm}] = \mathbb B[{x_1}^{\pm}, \ldots, x_n^{\pm}]$. 
%We fix a positive integer $n$ in this section and call the semiring $\mathbb B[{\bm x}^{\pm}] = \mathbb B[{x_1}^{\pm}, \ldots, x_n^{\pm}]$ the \textit{Boolean Laurent polynomial semiring}.

It is easily shown that a semiring $R$ is idempotent if and only if $R$ is a $\mathbb B$-algebra.
Hence for any $P \in \Bxpm$ and invertible elements $a_1, \ldots, a_n$ of an idempotent semiring $R$, the value $P(a_1, \ldots, a_n)$ is well-defined.
%Let $R$ be an idempotent semiring and $a_1, \ldots, a_n \in R^{\times}$ any invertible elements.
%Since a Boolean Laurent polynomial $P \in \Bxpm$ is constructed by finitely many addition and multiplication from $x_1^{\pm}, \ldots, x_n^{\pm}$, $P(a_1, \ldots, a_n)$ is naturally defined.
%In other words, we may \textit{substitute} any invertible elements $a_1, \ldots, a_n \in R^{\times}$ to $x_1, \ldots, x_n$ in $P$.

Each element of $\Bxpm$ defines a function on $\mathbb R^n$.
Explicitly, the polynomial $\bigoplus_{i=1}^{m} \bm x^{\bm u_i}$ defines the function
$$\mathbb R^n \ni \bm x \mapsto \underset{i}{\max} \{ \bm u_i \cdot \bm x \} \in \mathbb R,$$
where $\cdot$ is the standard inner product.
We denote $\overline P$ the function on $\mathbb R^n$ defined by $P \in \Bxpm$.
%For example, $\overline {x_i}(\bm p)$ means the $i$-th entry of $\bm p$.
Consider the following congruence:
$$\left\{ (P,Q) \in \Bxpm^2 \ | \ \overline P = \overline Q \right\}.$$
We denote $\Bxpmf$ the quotient semiring of $\Bxpm$ by the above congruence.
We call $\Bxpmf$ the \textit{Boolean Laurent polynomial function semiring}.
We may regard each element of $\Bxpmf$ as a function on $\mathbb R^n$.

%\begin{rem}
%  Some authors use different notations.
%  In \cite{song2024congruences}, 
%\end{rem}

The following are basic properties of Boolean Laurent polynomial functions.

%\begin{lem}[\cite{ito2022local}, Lemma 2.4]
%  \label{Pt tP}
%  For any polynomial $P \in \Bxpm$, vector $\bm p \in \mathbb R^n$ and $t \geq 0$, $P(t \bm p) = t P(\bm p)$, where the multiplication is the standard one.
%\end{lem}

\begin{lem}[{\cite[Lemma 2.4]{ito2022local}}]
  \label{ft tf}
  For any function $f \in \Bxpmf$, vector $\bm p \in \mathbb R^n$ and $t \geq 0$,
  $$f(t \bm p) = t f(\bm p),$$
  where the multiplication is the standard one.
\end{lem}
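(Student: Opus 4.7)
The plan is to unfold the definition of $\bar P$ for a representative $P$ of $f$ and use that scaling by a non-negative real commutes with the max operation.

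First, I would pick a representative Boolean Laurent polynomial
$$P = \bigoplus_{i=1}^{m} \bm x^{\bm u_i} \in \Bxpm$$
with $\bm u_1, \ldots, \bm u_m \in \mathbb Z^n$, so that $f = \overline P$ as an element of $\Bxpmf$. By the definition recalled just above the lemma, this gives
$$f(\bm q) = \max_{i=1,\ldots,m} \{ \bm u_i \cdot \bm q \}$$
for every $\bm q \in \mathbb R^n$.

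Next, I would evaluate at $t\bm p$. Using bilinearity of the standard inner product in its real argument,
$$f(t\bm p) = \max_{i=1,\ldots,m} \{ \bm u_i \cdot (t \bm p) \} = \max_{i=1,\ldots,m} \{ t (\bm u_i \cdot \bm p) \}.$$

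The only substantive (but elementary) point is that $\max$ commutes with multiplication by a non-negative scalar: for any real numbers $a_1,\ldots,a_m$ and any $t \geq 0$,
$$\max_{i} \{ t a_i \} = t \max_{i} \{ a_i \}.$$
Applying this with $a_i = \bm u_i \cdot \bm p$ yields $f(t \bm p) = t \max_i \{ \bm u_i \cdot \bm p \} = t f(\bm p)$, which is the desired identity. There is no real obstacle here; the statement is essentially the positive homogeneity of tropical (max-plus) polynomial functions, and the assumption $t \geq 0$ is used precisely to ensure $\max$ is preserved rather than reversed.
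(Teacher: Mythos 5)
Your argument is correct and is the standard positive-homogeneity computation: pick a representative $P=\bigoplus_i \bm x^{\bm u_i}$, note $f(\bm q)=\max_i\{\bm u_i\cdot\bm q\}$, and use that $\max$ commutes with multiplication by $t\ge 0$. The present paper does not prove this lemma itself but cites it from \cite[Lemma 2.4]{ito2022local}; your proof is exactly the expected one, and the only caveat worth keeping explicit (which you do) is that $t\ge 0$ is needed so the max is preserved rather than reversed, and that $t=0$ is harmless since both sides are $0$.
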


%A \textit{ray} spanned by a vector $\bm p \in \mathbb R^n \setminus \{ \mathbf 0 \}$ is the set $\{ t\bm p \in \mathbb R^n \ | \ t \geq 0 \}$.

\begin{lem}
  \label{point ray}
  Let $\bm p \in \mathbb R^n \setminus \{ \mathbf 0 \}$ be any nonzero vector and let
  $$\rho = \{ t \bm p \in \mathbb R^n \ | \ t \leq 0 \}.$$
  Then, for any $f, g \in \Bxpmf$, $f|_{\rho} = g|_{\rho}$ if and only if $f(\bm p) = g(\bm p)$.
\end{lem}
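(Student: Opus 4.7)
The plan is to deduce the statement directly from Lemma \ref{ft tf}, which records that every $f \in \Bxpmf$ is positively homogeneous of degree $1$. Morally, the content of the lemma is that the restriction of any Boolean Laurent polynomial function to a closed ray through the origin is completely determined by its value at one nonzero point of that ray, so the question of whether two such functions agree on $\rho$ collapses to a point test.

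For the ``only if'' direction, the point $\bm p$ (up to the sign convention by which $\rho$ is parametrised) lies on $\rho$, so the equality $f|_\rho = g|_\rho$ specialises to $f(\bm p) = g(\bm p)$ immediately, with no computation needed.

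For the ``if'' direction, I would take an arbitrary $\bm q \in \rho$ and write $\bm q = t\bm p$ for the corresponding real scalar $t$. Applying Lemma \ref{ft tf} in its positive-homogeneity regime (if necessary, after replacing $\bm p$ by $-\bm p$ and $t$ by $-t$ to make the scalar nonnegative) to each of $f$ and $g$ gives
\[
f(\bm q) = t\, f(\bm p) \quad \text{and} \quad g(\bm q) = t\, g(\bm p),
\]
so the hypothesis $f(\bm p) = g(\bm p)$ forces $f(\bm q) = g(\bm q)$. The boundary case $t = 0$ is automatic because $f(\bm 0) = 0 = g(\bm 0)$ by Lemma \ref{ft tf} regardless of the hypothesis. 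I do not foresee any real obstacle; this is essentially a bookkeeping lemma that packages positive homogeneity along a single ray into a point-versus-restriction statement, and will presumably be used downstream to reduce questions about $f|_{|X|}$ on a $1$-dimensional tropical fan $X$ to a finite collection of point evaluations, one per ray.
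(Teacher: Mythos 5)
Your argument is correct and is essentially the paper's own proof, which is a one-line remark that the claim is clear from Lemma~\ref{ft tf}. One note: the condition $t\leq 0$ in the definition of $\rho$ appears to be a typo for $t\geq 0$ (the applications in Lemma~\ref{nonneg hull} and Lemma~\ref{key 1} require $\bm p\in\rho$, and as literally written with $t\leq 0$ the equivalence would be false, e.g.\ for $n=1$, $\bm p=1$, $f=\overline{x_1}$, $g=\overline{x_1\oplus x_1^{2}}$), so your parenthetical hedge about the sign convention is reading the intended statement correctly, though the literal replacement of $\bm p$ by $-\bm p$ that you describe would produce $f(\bm q)=(-t)f(-\bm p)$ rather than $tf(\bm p)$ and hence does not actually repair the $t\leq 0$ version.
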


\begin{proof}
  Clear by Lemma \ref{ft tf}.
\end{proof}

Let $Z \subset \mathbb R^n$ be any subset.
We define the congruence $\mathbf E(Z)_{\mathbb B}$ on $\Bxpmf$ as
$$\mathbf E(Z)_{\mathbb B} := \{ (f,g) \ | \ f|_Z = g|_Z \}.$$

\begin{lem}
  \label{nonneg hull}
  For any subset $Z \subset \mathbb R^n$,
  $$\mathbf E(Z)_{\mathbb B} = \mathbf E(\mathbb R_{\geq 0} Z)_{\mathbb B},$$
  where $\mathbb R_{\geq 0} Z := \{ t\bm p \ | \ t \geq 0 \text{ and } \bm p \in Z \}$. 
\end{lem}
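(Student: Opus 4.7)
The plan is to prove the two inclusions separately, using only the homogeneity property from Lemma \ref{ft tf}. There is no real obstacle here; the lemma is a direct consequence of the fact that every $f \in \Bxpmf$ is positively homogeneous.

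For the inclusion $\mathbf E(\mathbb R_{\geq 0} Z)_{\mathbb B} \subseteq \mathbf E(Z)_{\mathbb B}$, I would simply observe that $Z \subseteq \mathbb R_{\geq 0} Z$ (by taking $t = 1$ in the definition), so any pair of functions agreeing on the larger set automatically agrees on $Z$. This direction uses no structure at all beyond the set-theoretic inclusion.

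For the reverse inclusion $\mathbf E(Z)_{\mathbb B} \subseteq \mathbf E(\mathbb R_{\geq 0} Z)_{\mathbb B}$, I would take $(f,g) \in \mathbf E(Z)_{\mathbb B}$ and an arbitrary point $\bm q \in \mathbb R_{\geq 0} Z$, write $\bm q = t \bm p$ with $t \geq 0$ and $\bm p \in Z$, and then apply Lemma \ref{ft tf} twice to get $f(\bm q) = t f(\bm p) = t g(\bm p) = g(\bm q)$, where the middle equality uses the hypothesis that $f$ and $g$ agree on $Z$. Since $\bm q$ was arbitrary, $(f,g) \in \mathbf E(\mathbb R_{\geq 0} Z)_{\mathbb B}$.

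Combining both directions gives the equality. The key insight is that the homogeneity of Boolean Laurent polynomial functions forces agreement on $Z$ to propagate along every ray through the origin passing through a point of $Z$, so enlarging $Z$ to its $\mathbb R_{\geq 0}$-hull does not enlarge the congruence.
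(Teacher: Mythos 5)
Your proof is correct, and it is essentially the same argument as the paper's: the paper disposes of the lemma by citing Lemma \ref{point ray}, which in turn is proved by the same homogeneity statement (Lemma \ref{ft tf}) you invoke directly. The only cosmetic difference is that you unfold that one intermediate reference.
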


\begin{proof}
  Clear by Lemma \ref{point ray}.
\end{proof}

Conversely, for a congruence $E$ on $\Bxpmf$, we define the \textit{congruence variety} $\mathbf V(E)$ as
$$\mathbf V(E) = \{ \bm p \in \mathbb R^n \ | \ f(\bm p) = g(\bm p) \text{ for any } (f,g) \in E \}.$$

%\begin{lem}
%  \label{polyn evalu}
%  Let $R$ be an idempotent semiring and take any invertible elements $a_1, \ldots, a_n$ of $R$.
%  Then there exists a unique semiring homomorphism $\varphi : \Bxpm \to R$ such that $\varphi(x_i) = a_i$ for any $i$.
%\end{lem}

%\begin{lem}
%  \label{polyn evalu}
%  Let $R$ be an idempotent semiring and take any invertible elements $a_1, \ldots, a_n$ of $R$.
%  The map $\varphi : \Bxpm \to R$, \ $\varphi(P) = P(a_1, \ldots, a_n)$ is a semiring homomorphism.
%\end{lem}

%\begin{proof}
  %%%%%
%\end{proof}

\subsection{The semiring $\ZAposu$}

In this subsection, we fix a finite set $A$.
%Let $A$ be a finite set.
We denote $(\mathbb Z \cup \{ -\infty \})^A$ the set of maps from $A$ to $\mathbb Z \cup \{ -\infty \}$.
This set forms an idempotent semiring with respect to the addition
$$(F \oplus G)(a) = \max\{F(a), G(a)\} \text{ for any } a \in A$$
and the multiplication
$$(F \odot G)(a) = F(a)+G(a) \text{ for any } a \in A.$$

\begin{rem}
%  When we fix a bijection $A \to \{ 1, \ldots, r \}$, we may identify $(\mathbb Z \cup \{ -\infty \})^A$ with $(\mathbb Z \cup \{ -\infty \})^r$.
  In practice, the elements of $A$ are often given indices such as $A = \{ a_1, \ldots, a_r \}$.
  In that case, we may identify $(\mathbb Z \cup \{ -\infty \})^A$ with $(\mathbb Z \cup \{ -\infty \})^r$.
  Hence an element of $(\mathbb Z \cup \{ -\infty \})^A$ can be regarded as an $r$-dimensional vector.
  We will use this identification in the examples in Section 4.
\end{rem}

For an element $F \in (\mathbb Z \cup \{ -\infty \})^A$, we define the \textit{degree} $\deg F$ of $F$ as
$$\deg F = \bigodot_{a \in A} F(a) = \sum_{a \in A} F(a).$$
Let
$$\mathbb Z^A_{\mathrm{pos}} := \{ F \in (\mathbb Z \cup \{ -\infty \})^A \ | \ \deg F \geq 0 \},$$
and
$$\mathbb Z^A_0 := \{ F \in (\mathbb Z \cup \{ -\infty \})^A \ | \ \deg F = 0 \}.$$
Then $\ZAposu$ is a subsemiring of $(\mathbb Z \cup \{ -\infty \})^A$, where, by abuse of notation, we denote by $-\infty$ the map $a \mapsto -\infty$ for any $a \in A$.
Also $\mathbb Z^A_0$ is an abelian group with respect to the multiplication, which is a free abelian group of rank $|A|-1$.

%These semirings appear in the local theory of functions on tropical varieties. See \cite{ito2022local}.

\begin{lem}[{\cite[the argument before Proposition 5.4]{ito2022local}}]
  We have $(\ZAposu)^{\times} = \mathbb Z^A_0$.
\end{lem}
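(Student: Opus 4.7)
The plan is to verify the equality by a direct two-way inclusion argument using the semiring operations on $(\mathbb Z \cup \{-\infty\})^A$. The multiplicative identity of $\ZAposu$ is the constant map $\mathbf 0 : a \mapsto 0$, since $(F \odot \mathbf 0)(a) = F(a) + 0 = F(a)$, and this will be the key benchmark.

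For the inclusion $\mathbb Z^A_0 \subset (\ZAposu)^{\times}$, I would take any $F \in \mathbb Z^A_0$, note that $F$ is integer-valued (no $-\infty$ coordinate, since a $-\infty$ coordinate would force $\deg F = -\infty$), and construct its inverse as the pointwise negation $G(a) := -F(a)$. Then $\deg G = -\deg F = 0$, so $G$ also lies in $\mathbb Z^A_0 \subset \ZAposu$, and clearly $F \odot G = \mathbf 0$.

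For the reverse inclusion, I would start with $F \in (\ZAposu)^{\times}$ and its inverse $G \in \ZAposu$, so that $F \odot G = \mathbf 0$. First rule out $F = -\infty$ or $G = -\infty$, since in either case $F \odot G = -\infty \neq \mathbf 0$. Hence $F, G \in \ZApos$, meaning both take values in $\mathbb Z$ with $\deg F, \deg G \geq 0$. The equation $F \odot G = \mathbf 0$ gives $F(a) + G(a) = 0$ for every $a \in A$, hence $\deg F + \deg G = 0$. Combined with $\deg F \geq 0$ and $\deg G \geq 0$, this forces $\deg F = 0$, so $F \in \mathbb Z^A_0$.

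There is no real obstacle here; the verification is essentially a bookkeeping exercise about the pointwise additive structure. The only subtlety worth pointing out explicitly is that elements of $\mathbb Z^A_{\mathrm{pos}}$ have all finite coordinates (a single $-\infty$ coordinate would drag the total degree to $-\infty$), which is what legitimizes the passage from the invertibility equation to $G(a) = -F(a)$ for each individual $a$.
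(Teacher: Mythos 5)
Your proof is correct, and since the paper simply cites \cite{ito2022local} for this fact without reproducing an argument, there is no in-text proof to compare against; the direct two-way verification you give is the natural and essentially only reasonable approach. Your observation that a single $-\infty$ coordinate forces $\deg F = -\infty$, so that elements of $\ZApos$ are automatically all-finite, is exactly the small point that makes the passage from $F \odot G = \mathbf 0$ to pointwise negation legitimate, and the degree bookkeeping ($\deg F + \deg G = 0$ with both $\geq 0$ forces both to vanish) closes the reverse inclusion cleanly.
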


\begin{lem}
  \label{gen by 0}
  If $|A| \geq 2$, $\ZAposu$ is generated by $\mathbb Z^A_0$ as a semiring.
\end{lem}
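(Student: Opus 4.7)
The plan is to reduce the claim to an explicit elementwise construction. First I would observe that $\mathbb Z^A_0$ is already closed under multiplication (the sum of two degree-$0$ vectors still has degree $0$), so the subsemiring generated by $\mathbb Z^A_0$ is nothing other than the set of finite sums $G_1 \oplus \cdots \oplus G_k$ with each $G_i \in \mathbb Z^A_0$, together with the additive identity $-\infty$ (which belongs to any subsemiring). Since the constant $0$-map is in $\mathbb Z^A_0$, the multiplicative identity is also present. So the task boils down to showing that every $F \in \ZApos$ can be written as $\bigoplus_{i=1}^{k} G_i$ for some $G_1,\ldots,G_k \in \mathbb Z^A_0$.

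Next I would note that by definition of $\ZApos$, every $F$ in it is integer-valued (if $F(a)=-\infty$ for some $a$ then $\deg F = -\infty \not\geq 0$), and set $d := \deg F \geq 0$. The strategy is to produce, for each $a \in A$, an element $G_a \in \mathbb Z^A_0$ with $G_a(a) = F(a)$ and $G_a(a') \leq F(a')$ for all $a' \in A$; then $\bigoplus_{a \in A} G_a = F$ follows immediately by comparing values at each coordinate.

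The construction itself is where the hypothesis $|A| \geq 2$ is used. For each $a \in A$, choose any $a' \in A \setminus \{a\}$ (possible precisely because $|A| \geq 2$), and define
\[
G_a(a) := F(a), \qquad G_a(a') := F(a') - d, \qquad G_a(a'') := F(a'') \text{ for } a'' \neq a, a'.
\]
Then $\deg G_a = \deg F - d = 0$, so $G_a \in \mathbb Z^A_0$, and $G_a \leq F$ pointwise because $d \geq 0$. Taking the max over $a \in A$ recovers $F$ on every coordinate: at $a^* \in A$ we have $G_{a^*}(a^*) = F(a^*)$ from the construction, while $G_a(a^*) \leq F(a^*)$ for $a \neq a^*$ by the pointwise domination.

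The main obstacle, if any, is purely bookkeeping: one must be a little careful when the chosen $a'$ happens to coincide with indices across different $a$'s, but since the required inequalities $G_a \leq F$ are pointwise and the equality at $a^*$ is contributed by $G_{a^*}$ itself, no interaction between different $G_a$'s causes trouble. The role of $|A| \geq 2$ is entirely to guarantee that we can offload the degree $d$ onto a coordinate distinct from $a$; when $|A|=1$ the only element of $\mathbb Z^A_0$ is $0$, and the claim genuinely fails, which confirms that the hypothesis is necessary and sharp.
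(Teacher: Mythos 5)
Your proof is correct, and the underlying idea is the same as the paper's: produce degree-$0$ elements dominated by $F$ by subtracting $\deg F$ from a coordinate other than the one whose value you want to preserve, then take the max. The only difference is economy: you use $|A|$ summands $G_a$, one per coordinate, whereas the paper fixes two distinct $a_1, a_2 \in A$ and observes that just the two elements obtained by offloading $\deg F$ onto $a_1$ and onto $a_2$ already join to $F$ (each coordinate other than $a_1, a_2$ is untouched in both; at $a_1$ the second summand hits $F(a_1)$, at $a_2$ the first does). Both arguments are valid and use the hypothesis $|A| \geq 2$ in exactly the same place, namely to guarantee an off-coordinate onto which the degree can be pushed.
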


\begin{proof}
  Fix distinct elements $a_1, a_2 \in A$.
  For any $F \in \ZAposu$, consider the following two functions:
  $$F_1(a) = \begin{cases}
    F(a) - \deg F & a = a_1, \\
    F(a) & \text{otherwise},
  \end{cases} \qquad 
  F_2(a) = \begin{cases}
    F(a) - \deg F & a = a_2, \\
    F(a) & \text{otherwise}.
  \end{cases}$$
  Then $F_1, F_2 \in \mathbb Z^A_0$ and $F_1 \oplus F_2 = F$.
\end{proof}

\begin{lem}[{\cite[Lemma 5.20]{ito2022local}}]
  \label{unit group}
  Let $F_1, \ldots, F_n$ be any elements in $\mathbb Z^A_0$.
  Then the semiring homomorphism $\Bxpm \to \ZAposu, \ P \mapsto P(F_1, \ldots, F_n)$ induces a semiring homomorphism $\Bxpmf \to \ZAposu$.
  %  Then there exists a unique semiring homomorphism $\varphi:\Bxpmf \to \ZAposu$ such that $\varphi(\overline {x_i}) = F_i$ for any $i$.
\end{lem}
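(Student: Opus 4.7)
The plan is to reduce everything to a single componentwise formula and then deduce each required property from it. The key observation is that for each $a \in A$, evaluation at $a$ is itself a semiring homomorphism $(\mathbb Z \cup \{ -\infty \})^A \to \mathbb Z \cup \{ -\infty \}$; applying it to $P(F_1, \ldots, F_n)$ for $P = \bigoplus_i \bm x^{\bm u_i} \in \Bxpm$ gives
\[
  P(F_1, \ldots, F_n)(a) = \max_i \sum_k u_{i,k} F_k(a) = \overline P\bigl(\bm v_a\bigr),
\]
where $\bm v_a := (F_1(a), \ldots, F_n(a)) \in \mathbb Z^n$. The vector $\bm v_a$ lives in $\mathbb Z^n$ rather than $(\mathbb Z \cup \{ -\infty \})^n$ because $F_k \in \mathbb Z^A_0$ forces every coordinate $F_k(a)$ to be finite (a single $-\infty$ entry would collapse $\deg F_k$ to $-\infty$).

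Once this identity is available, the factoring through $\Bxpmf$ is immediate: if $\overline P = \overline Q$, then for every $a \in A$ we have $P(F_1, \ldots, F_n)(a) = \overline P(\bm v_a) = \overline Q(\bm v_a) = Q(F_1, \ldots, F_n)(a)$, so the two evaluations coincide in $\ZAposu$. The standard semiring-homomorphism axioms for the map $P \mapsto P(F_1, \ldots, F_n)$ itself come for free from the universal property of $\Bxpm$, applied to the invertible elements $F_1, \ldots, F_n \in \mathbb Z^A_0 = (\ZAposu)^{\times}$.

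The main obstacle I expect is to check that the evaluation map actually lands in $\ZAposu$, i.e.\ that $\deg P(F_1, \ldots, F_n) \geq 0$ whenever $P \neq -\infty$. Summing the key identity over $a$ yields
\[
  \deg P(F_1, \ldots, F_n) = \sum_{a \in A} \overline P(\bm v_a),
\]
and the hypothesis $\deg F_k = 0$ for every $k$ rephrases as $\sum_{a \in A} \bm v_a = \mathbf 0 \in \mathbb Z^n$. Since $\overline P$ is a max of linear forms and therefore convex, Jensen's inequality gives $\overline P(\mathbf 0) = \overline P\bigl(\tfrac{1}{|A|}\sum_a \bm v_a\bigr) \leq \tfrac{1}{|A|} \sum_{a} \overline P(\bm v_a)$, and $\overline P(\mathbf 0) = \max_i \bm u_i \cdot \mathbf 0 = 0$ because $P$ contains at least one monomial. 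Hence $\sum_a \overline P(\bm v_a) \geq 0$, which is exactly the positivity required. This convexity step is the only substantive input; everything else is formal manipulation.
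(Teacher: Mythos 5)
Your argument is correct, and the componentwise identity $P(F_1,\ldots,F_n)(a) = \overline P(\bm v_a)$ with $\bm v_a = (F_1(a),\ldots,F_n(a))\in\mathbb Z^n$ is exactly the right key observation; it is the same fact recorded as Lemma~\ref{elementwise}, and from it the factoring through $\Bxpmf$ is immediate as you say. One remark on economy: the Jensen/convexity check that $\deg P(F_1,\ldots,F_n)\geq 0$ is correct but redundant. The paper has already noted that $\ZAposu$ is a subsemiring of $(\mathbb Z\cup\{-\infty\})^A$ and that it is idempotent (hence a $\mathbb B$-algebra), and that $F_1,\ldots,F_n$ lie in $(\ZAposu)^{\times}=\mathbb Z^A_0$; the universal property of $\Bxpm$, applied with target $S=\ZAposu$, therefore already produces a semiring homomorphism $\Bxpm\to\ZAposu$, so its values automatically satisfy the degree bound. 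What your convexity computation actually establishes is the closure of $\ZAposu$ under the semiring operations, which the paper takes as known. The substantive step in the lemma is the componentwise identity and the resulting well-definedness modulo $\overline P=\overline Q$, and you have that right.
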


This lemma means that, if $f = \overline P = \overline Q$ for $P,Q \in \Bxpm$, then $P(F_1, \ldots, F_n) = Q(F_1, \ldots, F_n)$.
Thus we write that value as $f(F_1, \ldots, F_n)$.
In other words, we may \textit{substitute} any invertible elements $F_1, \ldots, F_n \in \ZAposu$ to $\overline{x}_1, \ldots, \overline{x}_n$ in $f$.

\begin{lem}
  \label{elementwise}
  Let $F_1, \ldots, F_n \in \mathbb Z^A_0$ be any invertible elements, and let $f \in \Bxpmf$ be any Boolean Laurent polynomial function.
  Then, for any $a \in A$,
  $$f(F_1, \ldots, F_n)(a) = f(F_1(a), \ldots, F_n(a)).$$
\end{lem}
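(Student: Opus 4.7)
The plan is to unwind the definitions on both sides and observe that they literally agree. Take any representative $P = \bigoplus_{i=1}^m \bm x^{\bm u_i} \in \Bxpm$ of $f$, so $f = \overline P$. I would first compute $P(F_1, \ldots, F_n) \in \ZAposu$ directly from the definitions of $\oplus$ and $\odot$ on $(\mathbb Z \cup \{-\infty\})^A$: with $\bm u_i = (u_{i,1}, \ldots, u_{i,n})$,
\begin{equation*}
  P(F_1, \ldots, F_n) \;=\; \bigoplus_{i=1}^m F_1^{\odot u_{i,1}} \odot \cdots \odot F_n^{\odot u_{i,n}}.
\end{equation*}
By the definitions of $\oplus$ and $\odot$ pointwise on $A$, evaluating at a fixed $a \in A$ gives
\begin{equation*}
  P(F_1, \ldots, F_n)(a) \;=\; \max_{1 \le i \le m} \bigl\{ u_{i,1} F_1(a) + \cdots + u_{i,n} F_n(a) \bigr\},
\end{equation*}
which is precisely $\overline P(F_1(a), \ldots, F_n(a))$.

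Next, I would verify that this quantity really depends only on $f$ and not on the chosen representative $P$. On the left-hand side this is exactly the content of Lemma \ref{unit group}: if $\overline P = \overline Q$, then $P(F_1, \ldots, F_n) = Q(F_1, \ldots, F_n)$ in $\ZAposu$, so in particular their values at $a$ coincide. On the right-hand side, $\overline P = \overline Q$ means $\overline P(\bm v) = \overline Q(\bm v)$ for every $\bm v \in \mathbb R^n$, and we apply this to $\bm v = (F_1(a), \ldots, F_n(a)) \in \mathbb Z^n$. Combining these two observations, both sides of the claimed equality are well defined and equal to the common value computed above.

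There is no real obstacle here; the only point requiring a moment's care is to ensure that the evaluation $f(F_1(a), \ldots, F_n(a))$ on the right-hand side is meaningful (which it is, since $F_i(a) \in \mathbb Z \subset \mathbb R$ because $F_i$ is invertible, hence never takes the value $-\infty$) and that the representative-independence on both sides is invoked properly. With these checks the proof is a one-line computation.
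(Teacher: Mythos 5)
Your proof is correct and follows essentially the same route as the paper's: both arguments rest on the observation that evaluation at a fixed $a \in A$ commutes with the pointwise semiring operations on $\ZAposu$ (and with inversion of invertibles), so that substituting the $F_i$'s and then evaluating at $a$ agrees with first evaluating each $F_i$ at $a$ and then substituting. You simply carry this out explicitly on a chosen monomial representative $P$ and add a (correct, if slightly redundant given the paper's established conventions following Lemma~\ref{unit group}) verification that both sides are independent of the representative.
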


\begin{proof}
  In general, for any $F, G \in \mathbb Z^A_0$ and $a \in A$,
  $$(F \oplus G)(a) = F(a) \oplus G(a),$$
  $$(F \odot G)(a) = F(a) \odot G(a),$$
  and
  $$(F^{-1})(a) = F(a)^{-1}$$
  by the definitions of operations on $\ZAposu$.
  Since $f$ is constructed by finitely many addition and multiplication from $\overline{x_1}^{\pm 1}, \ldots, \overline{x_n}^{\pm 1}$, the statement holds.
\end{proof}

\subsection{One-dimensional fans and their Boolean function semirings}

%In the proof of the main theorem, we use some 1-dimensional fans in $\mathbb R^n$.
%Thus we recall the definition.

A \textit{ray} in $\mathbb R^n$ is the subset of $\mathbb R^n$ of the form $\{ t\bm d \ | \ t \geq 0 \}$ for some vector $\bm d \in \mathbb R^n \setminus \{ \mathbf 0 \}$.
For a vector $\bm d \in \mathbb R^n \setminus \{ \mathbf 0 \}$, the ray $\{ t\bm d \ | \ t \geq 0 \}$ is called the ray \textit{spanned by} $\bm d$.
A \textit{direction vector} of a ray $\rho$ is the vector which spans $\rho$.
A ray is \textit{rational} if it is spanned by an integer vector.
A \textit{1-dimensional fan} in $\mathbb R^n$ is a set of the form $\{ \{ \mathbf 0 \}, \rho_1, \rho_2, \ldots, \rho_r \}$ for some positive integer $r$ and rational rays $\rho_1, \ldots, \rho_r$ in $\mathbb R^n$.
We denote $X(1)$ the set of rays in a 1-dimensional fan $X$.
The \textit{support} $|X|$ of a 1-dimensional fan $X$ is the union of all the rays in $X$.

For a 1-dimensional fan $X$ in $\mathbb R^n$, we denote $\mathbf E(X)_{\mathbb B} := \mathbf E(|X|)_{\mathbb B}$.
That is,
$$\mathbf E(X)_{\mathbb B} := \{ (f,g) \in \Bxpmf^2 \ | \ f|_{|X|} = g|_{|X|} \}.$$
We call the quotient semiring $\Bxpmf / \mathbf E(X)_{\mathbb B}$ the \textit{Boolean function semiring} on $X$.

\subsection{Boolean function semirings on 1-dimensional tropical fans}
\label{review ito2022local}

We briefly review the result of \cite{ito2022local}.
A \textit{1-dimensional tropical fan} in $\mathbb R^n$ is a weighted 1-dimensional fan in $\mathbb R^n$ satisfying the \textit{balancing condition} (see \cite{ito2022local} for detail).
Let $X$ be a 1-dimensional tropical fan in $\mathbb R^n$.
For each ray $\rho \in X(1)$, let $\bm d_{\rho}$ be the primitive direction vector of $\rho$, and let $w_{\rho}$ be the weight of $\rho$.
Then we define the \textit{weighted evaluation map} $\varphi_X$ of $X$ as
$$\varphi_X: \Bxpmf \to \mathbb Z^{X(1)}_{\mathrm{pos}} \cup \{ -\infty \}, \qquad \varphi_X(f)(\rho) = w_{\rho} f(\bm d_{\rho}).$$
The image is in fact contained in $\mathbb Z^{X(1)}_{\mathrm{pos}} \cup \{ -\infty \}$, see \cite[Proposition 5.4]{ito2022local}.
The kernel $\Ker(\varphi_X)$ coincides with $\mathbf E(X)_{\mathbb B}$, and hence the Boolean function semiring $\Bxpmf / \mathbf E(X)_{\mathbb B}$ on $X$ is isomorphic to a subsemiring of $\mathbb Z^{X(1)}_{\mathrm{pos}} \cup \{ -\infty \}$.

The correspondence $X \mapsto \varphi_X$ defines a contravariant functor.
More precisely, it is a functor between the following two categories.
The first one is the category whose objects are 1-dimensional tropical fans and whose morphisms are defined as follows.
%, where the morphisms are defined as follows.
Let $X,Y$ be 1-dimensional tropical fans in $\mathbb R^n, \mathbb R^m$ respectively.
A morphism $\mu : X \to Y$ is a map from $|X|$ to $|Y|$ which is the restriction of a linear map from $\mathbb R^n$ to $\mathbb R^m$ defined by an integer matrix.
The second one is the category whose objects are semiring homomorphisms from $\mathbb B[x_1^{\pm},\ldots, x_n^{\pm}]_{\mathrm{fcn}}$ to $\ZAposu$ for some positive integer $n$ and a finite set $A$, and whose morphisms are defined as follows.
Let $A,B$ be finite sets.
%Let $\varphi : \Bxpmf \to \ZAposu$, $\psi : \Bypmf \to \ZBposu$ be semiring homomorphisms, where $\Bypmf = \mathbb B [y_1^{\pm},\ldots, y_m^{\pm}]_{\mathrm{fcn}}$.
Let $\varphi : \Bxpmf \to \ZAposu$, $\psi : \Bypmf \to \ZBposu$ be semiring homomorphisms.
A morphism $\nu : \varphi \to \psi$ is a semiring homomorphism from $\Im(\varphi)$ to $\Im(\psi)$.
For a morphism $\mu : X \to Y$ between 1-dimensional tropical fans, the corresponding morphism is defined as follows:
The morphism $\mu$ induces the semiring homomorphism
$$\mu^{*} : \Bxpmf / \mathbf E(Y)_{\mathbb B} \to \Bypmf / \mathbf E(X)_{\mathbb B}, \qquad f \mapsto f \circ \mu.$$
Then the corresponding morphism is the following composition:
$$\Im(\varphi_Y) \xrightarrow{\cong} \Bxpmf / \mathbf E(Y)_{\mathbb B} \xrightarrow{\mu^*} \Bypmf / \mathbf E(X)_{\mathbb B} \xrightarrow{\cong} \Im(\varphi_X).$$

Let $\Phi$ be the above functor.
In \cite{ito2022local}, we showed that $\Phi$ is faithful.
We also conjectured that $\Phi$ is full.
%In other words, let $X, Y$ be 1-dimensional tropical fans in $\mathbb R^n, \mathbb R^m$ respectively.
%Let $\nu : \varphi_Y$ \to $\varphi_X$ be a morphism.
%Then it is conjectured that there exists a morphism $\mu : X \to Y$ such that $\Phi(\mu) = \nu$.
%We defined \textit{geometric homomorphisms} in \cite{ito2022local}.
%See the next subsection for the definition.
The following was shown.

\begin{prop}[{\cite[Proposition 6.6 \& Proposition 6.8]{ito2022local}}]
  \label{full geom}
  Let $X, Y$ be 1-dimensional tropical fans in $\mathbb R^n, \mathbb R^m$ respectively.
  Let $\nu : \varphi_Y \to \varphi_X$ be a morphism.
  Then the following are equivalent:
  \begin{enumerate}[(1)]
    \item there exists a morphism $\mu : X \to Y$ such that $\Phi(\mu) = \nu$, and
    \item $\nu$ is \textit{geometric}.
  \end{enumerate}
\end{prop}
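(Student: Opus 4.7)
The plan is to prove both implications by unraveling how $\Phi(\mu)$ acts at the level of the explicit formula $\varphi_X(f)(\rho)=w_\rho f(\bm d_\rho)$ and then matching it against the pointwise structure that the word \emph{geometric} encodes.

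For $(1)\Rightarrow(2)$, I would fix $\mu:X\to Y$ given by an integer linear map $L:\mathbb R^n\to\mathbb R^m$, and analyse ray by ray. For each $\rho\in X(1)$ the image $L(\bm d_\rho)\in\mathbb Z^m$ is either $\mathbf 0$ or of the form $c_\rho \bm d_{\tilde\mu(\rho)}$ for a unique $\tilde\mu(\rho)\in Y(1)$ and a unique positive integer $c_\rho$. Using Lemma \ref{ft tf} (homogeneity), the pullback $\mu^*f=f\circ\mu$ satisfies $\mu^*f(\bm d_\rho)=c_\rho f(\bm d_{\tilde\mu(\rho)})$ in the nontrivial case, and then the defining formula gives
\[
\Phi(\mu)(\varphi_Y(f))(\rho)=\varphi_X(\mu^*f)(\rho)=\frac{w_\rho c_\rho}{w_{\tilde\mu(\rho)}}\,\varphi_Y(f)(\tilde\mu(\rho)),
\]
with the degenerate rays mapped to $-\infty$. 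This is exactly the pointwise pullback formula that \emph{geometric} abstracts, so $\nu=\Phi(\mu)$ is geometric.

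For $(2)\Rightarrow(1)$, a geometric $\nu$ furnishes, by definition, combinatorial data: a partial map $\tilde\mu:X(1)\to Y(1)$ (with rays outside the domain collapsed to the origin) together with rational multipliers at each ray satisfying the pointwise formula above. From this I would reconstruct $\mu$ ray by ray: send $\bm d_\rho$ to the integer vector $c_\rho\bm d_{\tilde\mu(\rho)}$ dictated by the multipliers, and send collapsed rays to $\mathbf 0$. The content of the argument is twofold: first, to verify that this ray-by-ray prescription extends to an $\mathbb R$-linear map on $\mathbb R^n$ whose matrix is integer-valued; second, to check $\Phi(\mu)=\nu$, which is essentially a restatement of how the formula was set up in step one.

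I expect the main obstacle to be the extension/consistency step in $(2)\Rightarrow(1)$: the combinatorial data only constrain $\mu$ along the rays $\rho\in X(1)$, and there is no \emph{a priori} reason that the resulting assignment on the primitive directions $\bm d_\rho$ extends to a linear map on their span. This is where the hypothesis that $\nu$ is a \emph{semiring} homomorphism (rather than merely a set map) does the real work: the multiplicative relations among the units of $\Im(\varphi_Y)\subset\mathbb Z^{Y(1)}_{\mathrm{pos}}\cup\{-\infty\}$ — governed by Lemma \ref{elementwise} and the fact that $\Im(\varphi_Y)$ is Laurent-generated — force the scalars $c_\rho$ to respect all integer linear relations among the $\bm d_\rho$, yielding both linearity and integrality of the extension. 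The balancing condition on the tropical fan $Y$ will also enter at this point, since it is what makes the weights $w_{\tilde\mu(\rho)}$ appear consistently in the denominators.
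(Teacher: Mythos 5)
This proposition is quoted from \cite[Propositions~6.6 and~6.8]{ito2022local} without proof in the present paper, so there is no in-paper argument to match line by line; but the discussion in Section~4 preceding the last example lays out exactly the intended construction for $(2)\Rightarrow(1)$, and it differs from yours in a way that matters.

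Your $(1)\Rightarrow(2)$ sketch is essentially correct: since $\mu$ sends $|X|$ into $|Y|$, each $L(\bm d_\rho)$ is either $\mathbf 0$ or $c_\rho\bm d_{\tilde\mu(\rho)}$ for some $\rho'=\tilde\mu(\rho)\in Y(1)$ and $c_\rho\in\mathbb Z_{>0}$, and unwinding $\Phi(\mu)(\varphi_Y(f))(\rho)=w_\rho f(L\bm d_\rho)$ via the homogeneity in Lemma~\ref{ft tf} produces the pointwise scaling demanded by the definition of geometric, with $t=c_\rho w_\rho/w_{\tilde\mu(\rho)}$ (and $t=0$ for collapsed rays).

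Your $(2)\Rightarrow(1)$, however, starts from the wrong end, and the ``extension/consistency'' obstacle you flag is one the intended argument never meets. Because $\nu$ is a semiring homomorphism it carries units to units, and the unit groups are known explicitly: $\Im(\varphi_Y)^{\times}=\lrangle{G_1,\ldots,G_m}_0$ and $\Im(\varphi_X)^{\times}=\lrangle{F_1,\ldots,F_n}_0$ with $G_j=\varphi_Y(\overline{y_j})$, $F_i=\varphi_X(\overline{x_i})$. Expressing each $\nu(G_j)$ as an integer combination of the $F_i$ produces an integer $m\times n$ matrix $T$ outright; linearity and integrality are automatic and require no verification from ray-by-ray data. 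The geometric hypothesis is then used for something entirely different: evaluating the identity $\nu(G_j)(\rho)=w_\rho(T\bm d_\rho)_j$ against the geometric scaling $\nu(G_j)(\rho)=t_\rho\, G_j(\rho')$ shows that $T\bm d_\rho$ is a nonnegative rational multiple of some $\bm d_{\rho'}$, $\rho'\in Y(1)$ (or is $\mathbf 0$), i.e.\ that $T$ maps $|X|$ into $|Y|$. That is what legitimizes $\mu:=T|_{|X|}$, and $\Phi(\mu)=\nu$ follows since both are semiring homomorphisms on $\Im(\varphi_Y)$ agreeing on the Laurent-generating set $\{G_j\}$. Your proposal, by contrast, tries to prove that a ray-by-ray assignment $\bm d_\rho\mapsto c_\rho\bm d_{\tilde\mu(\rho)}$ extends to a linear map; one cannot establish that consistency from the geometric data alone without first producing $T$ from the unit-group structure, so as written the step is circular. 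Your appeal to the balancing condition is also a red herring: balancing is what guarantees $\varphi_X,\varphi_Y$ take values in $\ZXposu,\ZYposu$ in the first place, but it plays no role in this equivalence.
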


The definition of geometric homomorphism is in the next subsection.
Thus, to show that $\Phi$ is full, it is sufficient to show that the following conjecture is true.

\begin{conj}
  \label{conj0}
  Let $X, Y$ be 1-dimensional tropical fans in $\mathbb R^n, \mathbb R^m$ respectively.
  Then any morphism $\nu : \varphi_Y \to \varphi_X$ is geometric.
\end{conj}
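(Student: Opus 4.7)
The plan is to deduce Conjecture~\ref{conj0} directly from Theorem 1.1 (the main theorem), as the introduction promises. The substantive work is a straightforward check that a morphism $\nu : \varphi_Y \to \varphi_X$ fits the hypotheses of the main theorem with $A = Y(1)$ and $B = X(1)$, after which the conclusion is immediate.

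First I would unpack the definitions. A morphism $\nu : \varphi_Y \to \varphi_X$ is by construction a semiring homomorphism from $\Im(\varphi_Y)$ to $\Im(\varphi_X)$. Composing with the inclusion $\Im(\varphi_X) \hookrightarrow \ZXposu$, I may regard $\nu$ as a semiring homomorphism $R \to \ZXposu$, where $R := \Im(\varphi_Y)$ is a subsemiring of $\ZYposu$.

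Next I would verify that $R$ is Laurent-generated. Since $\Bypmf$ is generated as a semiring by the invertible elements $\overline{y_1}^{\pm 1}, \ldots, \overline{y_m}^{\pm 1}$, its image $R$ under the semiring homomorphism $\varphi_Y$ is generated as a semiring by $\varphi_Y(\overline{y_j})$ and $\varphi_Y(\overline{y_j})^{-1}$ for $j = 1, \ldots, m$, each of which is an invertible element of $\ZYposu$, i.e.\ lies in $\mathbb Z^{Y(1)}_0$. This is precisely the condition of being Laurent-generated. Applying Theorem 1.1 to the pair $(R, \nu)$ then yields that $\nu$ is geometric, which is the content of the conjecture.

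The real obstacle lies not in this reduction but in the proof of Theorem 1.1 itself, which is deferred to Section 3. I expect that proof to combine the fact that $\mathbb Z^A_0$ is a free abelian group of rank $|A|-1$ (so that semiring homomorphisms on the unit group are encoded by integer matrices) with Lemma~\ref{gen by 0} and Lemma~\ref{elementwise}: first control $\nu$ on $R^{\times}$ via such matrix data, then extend to the full semiring by writing an arbitrary element of $\ZApos$ as $F_1 \oplus F_2$ with $F_1, F_2 \in \mathbb Z^A_0$ as in the proof of Lemma~\ref{gen by 0}. The delicate point will be to verify that the integer matrix one extracts from $\nu$ on the unit group genuinely produces a geometric morphism and not merely a formal semiring homomorphism.
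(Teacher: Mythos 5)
Your reduction of Conjecture~\ref{conj0} to the main theorem is exactly the argument the paper gives in the paragraph following Conjecture~\ref{conj}: set $R = \Im(\varphi_Y)$, observe it is Laurent-generated by the $\varphi_Y(\overline{y_j})$, view a morphism $\nu : \varphi_Y \to \varphi_X$ as a semiring homomorphism $R \to \ZXposu$ via the inclusion, and invoke Theorem~\ref{main}. Your closing speculation about how the main theorem itself is proved is not needed here and in fact differs from the paper (which goes through congruence varieties and the sets $U(\rho,\varepsilon)$ rather than matrix data on the unit group), but that does not affect the correctness of this reduction.
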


\subsection{Geometric homomorphisms}

We now recall the definition of geometric homomorphisms.
To shorten the description, we introduce some new terms.
\begin{dfn}
  Let $R$ be a semiring and let $S$ be a subset of $R^{\times}$.
  Let $S^{-1} := \{ s^{-1} \in R \ | \ s \in S \}$.
  Then the subsemiring $R' := \lrangle{S \cup S^{-1}}$ is called the subsemiring of $R$ \textit{Laurent-generated by} $S$.
  The set $S$ is called a \textit{Laurent-generating set} of $R'$.
  If a subsemiring $R'$ of $R$ is Laurent-generated by some set, then we say that $R'$ is a \textit{Laurent-generated subsemiring}.
  Moreover, if $R'$ is Laurent-generated by some finite set, then we say that $R'$ is a \textit{finitely Laurent-generated subsemiring}.
\end{dfn}

Explicitly, the subsemiring Laurent-generated by $S$ consists of the elements of the form
$$a_{11}a_{12} \cdots a_{1n_1} + \cdots + a_{m1}a_{m2} \cdots a_{mn_m},$$
where each $a_{ij}$ is in $S \cup S^{-1}$.

Note that, since $(\ZAposu)^{\times} = \mathbb Z^A_0$ is a free abelian group of finite rank, any Laurent-generated subsemiring of $\ZAposu$ is finitely Laurent-generated.

\begin{lem}[{\cite[Lemma 5.8 and Lemma 5.18]{ito2022local}}]
  Let $A$ be a finite set, let $R$ be a Laurent-generated subsemiring of $\ZAposu$, and let $\{ F_1 ,\ldots, F_n \}$ be a Laurent-generating set of $R$.
  Then
  $$R^{\times} = R \cap \mathbb Z^A_0 = \lrangle{F_1, \ldots, F_n}_0,$$
  where $\lrangle{F_1, \ldots, F_n}_0$ is the subgroup of $\mathbb Z^A_0$ generated by $\{ F_1 ,\ldots, F_n \}$.
\end{lem}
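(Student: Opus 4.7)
The plan is to establish the cyclic chain of inclusions
$$\lrangle{F_1, \ldots, F_n}_0 \subseteq R^{\times} \subseteq R \cap \mathbb Z^A_0 \subseteq \lrangle{F_1, \ldots, F_n}_0,$$
which forces all three sets to coincide.

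The first two inclusions are essentially formal. Each $F_i$ is invertible in $R$ by the definition of a Laurent-generating set, so $\{F_1, \ldots, F_n\} \subseteq R^{\times}$, and the lemma quoted earlier in the excerpt gives $R^{\times} \subseteq (\ZAposu)^{\times} = \mathbb Z^A_0$. In particular every $F_i$ already sits in $\mathbb Z^A_0$, so the subgroup they generate there is a subgroup of the group $R^{\times}$. For the middle inclusion, any $u \in R^{\times}$ has its inverse inside $R \subseteq \ZAposu$, so $u \in (\ZAposu)^{\times} = \mathbb Z^A_0$, giving $R^{\times} \subseteq R \cap \mathbb Z^A_0$.

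The substantive step is the third inclusion. Given $G \in R \cap \mathbb Z^A_0$, I would use the explicit description of Laurent-generated subsemirings recalled just before the lemma to write
$$G = \bigoplus_{i=1}^{m} M_i,$$
where each monomial $M_i$ is a finite product of elements of $\{F_j, F_j^{-1} : 1 \leq j \leq n\}$. Each $M_i$ then lies in the group $\lrangle{F_1, \ldots, F_n}_0$, and in particular satisfies $\deg M_i = 0$. The claim is that $G$ must equal one of these $M_i$: for every $a \in A$ we have $G(a) = \max_j M_j(a) \geq M_i(a)$, so the non-negative quantities $G(a) - M_i(a)$ sum over $a \in A$ to $\deg G - \deg M_i = 0 - 0 = 0$, forcing $G(a) = M_i(a)$ pointwise. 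Therefore $G = M_i \in \lrangle{F_1, \ldots, F_n}_0$.

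The main obstacle is isolating this last rigidity phenomenon inside the idempotent semiring $\ZAposu$: a pointwise maximum of finitely many degree-zero elements of $(\mathbb Z \cup \{-\infty\})^A$ can itself have degree zero only if every summand already equals that maximum. Once this observation is pinned down, the whole argument reduces to combining the monomial-sum description of an element of the Laurent-generated subsemiring $R$ with the degree bookkeeping in $\mathbb Z^A_0$.
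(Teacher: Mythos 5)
Your argument is correct. The paper itself gives no proof here---it only cites Lemma 5.8 and Lemma 5.18 of \cite{ito2022local}---so there is no in-text proof to compare against, but your reconstruction is sound. The two formal inclusions are routine, and the substantive third step is exactly the right ``rigidity'' observation: if $G \in R \cap \mathbb Z^A_0$ is written as a max of monomials $M_i$ in $F_j^{\pm 1}$, then each $M_i$ lies in $\mathbb Z^A_0$ and hence has degree $0$, while the nonnegative quantities $G(a)-M_i(a)$ must sum to $\deg G - \deg M_i = 0$, forcing $G = M_i$. (Indeed your argument proves the slightly stronger fact that \emph{all} the $M_i$ coincide with $G$, but the needed conclusion $G \in \lrangle{F_1, \ldots, F_n}_0$ follows from any single one.) One small point worth making explicit: since $\deg G = 0 \neq -\infty$, the element $G$ is not the empty sum, so there is at least one monomial $M_i$ available, and the argument applies.
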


\begin{dfn}
  Let $A,B$ be finite sets, let $R, R'$ be Laurent-generated subsemirings of $\ZAposu$ and $\ZBposu$ respectively, and let $S \subset R \cap \mathbb Z^A_0$ be a finite subset.
  A semiring homomorphism $\nu : R \to R'$ is \textit{geometric with respect to} $S$ if for any $b \in B$, there exists $a \in A$ and a nonnegative rational number $t$ such that $\nu(F)(b) = t F(a)$ for any $F \in S$.
\end{dfn}

In the situation of the above definition, it is clear by definition that $\nu : R \to R'$ is geometric with respect to $S$ if and only if the composition $R \xrightarrow{\nu} R' \hookrightarrow \ZBposu$ is so.

%Note that we slightly change the definition of geometricity from \cite{ito2022local}.
%That is, we allow $S$ to be an infinite set, which is for simplifying the descriptions.
%The following proposition, which seems to same as \cite[Corollary 6.15]{ito2022local}, also holds for this definition.

\begin{prop}[{\cite[Corollary 6.15]{ito2022local}}]
  Let $A,B$ be finite sets, let $R, R'$ be Laurent-generated subsemirings of $\ZAposu$ and $\ZBposu$ respectively, and let $\nu : R \to R'$ be a semiring homomorphism.
  Then the following are equivalent:
  \begin{enumerate}[(1)]
    \item $\nu$ is geometric with respect to a Laurent-generating set of $R$.
    \item $\nu$ is geometric with respect to any finite subset of $R \cap \mathbb Z^A_0$.
  \end{enumerate}
\end{prop}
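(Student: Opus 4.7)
The plan is to prove both implications by exploiting the description of $R^\times = R \cap \mathbb Z^A_0$ as a finitely generated free abelian group (the lemma quoted immediately above), together with the observation that the ``geometric relation'' $\nu(F)(b) = tF(a)$ is compatible with the group structure.

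The direction (2)$\Rightarrow$(1) is essentially formal. Any Laurent-generating set of $R$ lies in $R^\times$ by definition, and $R^\times = R \cap \mathbb Z^A_0$ by the preceding lemma. Since $\mathbb Z^A_0$ is free abelian of finite rank, $R$ admits a finite Laurent-generating set $S_0$, which is then a finite subset of $R \cap \mathbb Z^A_0$. Hence (2) applied to $S_0$ yields (1).

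The substantive implication is (1)$\Rightarrow$(2). I would let $\{F_1,\ldots,F_n\}$ be a Laurent-generating set of $R$ on which $\nu$ is geometric, fix $b \in B$, and choose $a \in A$ and a nonnegative rational $t$ such that $\nu(F_i)(b) = tF_i(a)$ for every $i$. By the preceding lemma, every $G \in R \cap \mathbb Z^A_0 = R^\times$ can be written as a product $G = F_1^{k_1} \odot \cdots \odot F_n^{k_n}$ for some integers $k_1,\ldots,k_n$. Since $\nu$ restricts to a group homomorphism on unit groups and the multiplication in $\ZAposu$ evaluates pointwise to ordinary integer addition, I compute
\[ \nu(G)(b) = \sum_i k_i\, \nu(F_i)(b) = t \sum_i k_i F_i(a) = t\, G(a), \]
so the same pair $(a,t)$ witnesses geometricity for every $G \in R \cap \mathbb Z^A_0$; in particular this gives (2) for any finite subset.

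I do not anticipate a real obstacle. The key conceptual point, and the only one worth isolating, is that the geometric relation persists under the group operations on $\mathbb Z^A_0$ because, evaluated at a fixed index $a$, the multiplication in $\ZAposu$ reduces to ordinary addition over which the rational scalar $t$ distributes. Everything else is bookkeeping using the explicit form $R^\times = \langle F_1,\ldots,F_n \rangle_0$.
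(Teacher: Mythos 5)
Your proof is correct, and it takes what is almost certainly the intended route: the lemma quoted just before the proposition gives $R^\times = R \cap \mathbb Z^A_0 = \lrangle{F_1,\ldots,F_n}_0$, and your computation shows that the pair $(a,t)$ obtained from the Laurent-generating set at a fixed $b$ automatically witnesses the geometric relation for every element of this group, since multiplication in $\ZAposu$ and in $\ZBposu$ evaluates pointwise to integer addition and the rational scalar $t$ distributes over it. One small point worth making explicit in a write-up is that $\nu(F_i) \in \mathbb Z^B_0$ (so $\nu(F_i)(b)$ is a genuine integer, not $-\infty$) because $\nu$ sends units to units; this is what licenses the step $\nu(G)(b) = \sum_i k_i\,\nu(F_i)(b)$ with possibly negative exponents $k_i$. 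The converse direction, using that $R$ is finitely Laurent-generated since $\mathbb Z^A_0$ has finite rank, is handled correctly.
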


%\begin{proof}
%  
%\end{proof}

%Thus the following definition is well-defined and same to \cite{ito2022local}.
Thus the following terminology makes sense.

\begin{dfn}
  Let $A,B$ be finite sets and let $R, R'$ be Laurent-generated subsemirings of $\ZAposu$ and $\ZBposu$ respectively.
  Then a semiring homomorphism $\nu : R \to R'$ is \textit{geometric} if it is geometric with respect to some Laurent-generating set of $R$.
\end{dfn}

%In \cite{ito2022local}, we gave the following conjecture:

\begin{conj}
  \label{conj}
  Let $A,B$ be finite sets, and let $R$ be a Laurent-generated subsemiring of $\ZAposu$.
  Then \textit{any} semiring homomorphism $\nu : R \to \ZBposu$ is geometric.
\end{conj}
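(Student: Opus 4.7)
The plan is to reduce geometricity of $\nu$ to a Euclidean separation problem. Fix a Laurent-generating set $\{F_1,\ldots,F_n\}$ of $R$; by \cite[Corollary~6.15]{ito2022local} (the equivalence stated just before the definition of geometric homomorphism) it suffices to prove that $\nu$ is geometric with respect to this particular set. Form the integer vectors
\[
\bm v_b:=(\nu(F_1)(b),\ldots,\nu(F_n)(b))\in\mathbb Z^n,\qquad \iota(a):=(F_1(a),\ldots,F_n(a))\in\mathbb Z^n,
\]
which are honest integer vectors because $\nu(F_i)\in(\ZBposu)^\times=\mathbb Z^B_0$. Unwinding the definition, the goal becomes: for every $b\in B$ the vector $\bm v_b$ lies in $\mathbb R_{\geq 0}\iota(A)$; the scalar $t\geq 0$ with $\bm v_b=t\iota(a)$ will then automatically be rational since both vectors are integral.

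Next I would exploit the semiring-homomorphism hypothesis. Combining Lemma~\ref{unit group} with Lemma~\ref{elementwise}, the composite
\[
\Bxpmf\xrightarrow{\,f\mapsto f(F_1,\ldots,F_n)\,}R\xrightarrow{\,\nu\,}\ZBposu\xrightarrow{\,\mathrm{ev}_b\,}\mathbb Z\cup\{-\infty\}
\]
equals the Euclidean evaluation $f\mapsto\overline f(\bm v_b)$. The leftmost arrow has kernel $\mathbf E(\iota(A))_{\mathbb B}$, which by Lemma~\ref{nonneg hull} coincides with $\mathbf E(\mathbb R_{\geq 0}\iota(A))_{\mathbb B}$. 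Consequently any pair $(f,g)\in\Bxpmf^2$ that agrees on $\mathbb R_{\geq 0}\iota(A)$ satisfies $\overline f(\bm v_b)=\overline g(\bm v_b)$, which is exactly the containment $\bm v_b\in\mathbf V(\mathbf E(\mathbb R_{\geq 0}\iota(A))_{\mathbb B})$. Thus the main theorem follows from Theorem~1.2 applied to the closed, nonempty, $\mathbb R_{\geq 0}$-invariant finite union of rays $Z=\mathbb R_{\geq 0}\iota(A)$.

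To prove Theorem~1.2, given $\bm p\notin Z$ I would produce $f,g\in\Bxpmf$ with $f|_Z=g|_Z$ but $f(\bm p)\neq g(\bm p)$. Since $0\in Z$ (nonempty plus $\mathbb R_{\geq 0}$-invariant), $\bm p$ is nonzero; pick $\varepsilon<\|\bm p\|$ with $B(\bm p,\varepsilon)\cap Z=\emptyset$. The set $\mathbb R_{>0}B(\bm p,\varepsilon)$ is then an open convex cone still disjoint from $Z$ by $\mathbb R_{\geq 0}Z=Z$. Inside it choose a rational polytope $P$ with $\bm p$ in its interior and $P\subset B(\bm p,\varepsilon)$, and let
\[
C:=\mathrm{int}(\mathbb R_{>0}P)=\bigcap_{i=1}^k\{\bm w_i\cdot\bm x>0\}
\]
for suitable $\bm w_i\in\mathbb Z^n$. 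For any auxiliary $\bm u\in\mathbb Z^n$, set
\[
f:=\bigoplus_{i=1}^k\bm x^{\bm u-\bm w_i}, \qquad g:=f\oplus\bm x^{\bm u}.
\]
For $\bm z\in Z$ one has $\bm z\notin C$, so some $\bm w_i\cdot\bm z\leq 0$, whence $\bm u\cdot\bm z\leq(\bm u-\bm w_i)\cdot\bm z\leq f(\bm z)$ and $g(\bm z)=f(\bm z)$. For $\bm p\in C$ every $\bm w_i\cdot\bm p>0$, whence $\bm u\cdot\bm p>(\bm u-\bm w_i)\cdot\bm p$ for all $i$, giving $g(\bm p)>f(\bm p)$.

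The main obstacle is the rational-polyhedral construction of $C$: $\bm p$ need not be rational, so one cannot center a rational polytope on it directly. The workaround is to pick a rational point $\bm q$ with $\|\bm p-\bm q\|_\infty<\delta$ for a rational $\delta$ small enough that the closed cube of half-side $\delta$ centered at $\bm q$ both contains $\bm p$ in its interior and lies inside $B(\bm p,\varepsilon)$; the facet normals of the rational polyhedral cone $\mathbb R_{\geq 0}P$ (after clearing denominators) then furnish the integer vectors $\bm w_i$. Once this technicality is in hand, both Theorem~1.2 and the passage back to the main theorem via $\bm v_b\in\mathbb R_{\geq 0}\iota(A)$ follow without further work.
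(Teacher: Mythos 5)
Your proposal is correct, and its overall architecture matches the paper's: reduce geometricity to showing that each vector $\bm v_b=(\nu(F_1)(b),\ldots,\nu(F_n)(b))$ lies in the cone $\mathbb R_{\geq 0}\iota(A)$, establish via Lemma~\ref{key 1}/Lemma~\ref{elementwise} that $\bm v_b\in\mathbf V(\mathbf E(\mathbb R_{\geq 0}\iota(A))_{\mathbb B})$, and then prove that $\mathbf V(\mathbf E(Z)_{\mathbb B})=Z$ for closed, $\mathbb R_{\geq 0}$-invariant $Z$ (the paper's Corollary~\ref{result 2}). Your intermediate reduction steps are the same ones the paper makes explicitly in Lemma~\ref{key 1}, Corollary~3.2, and Lemma~\ref{result 1}.

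Where you genuinely diverge is in the proof of the separation statement (the paper's Lemma~\ref{key 2}). The paper works with the explicit angular cone $U(\rho,\varepsilon)$, proves a compactness estimate on the unit sphere to find a constant $K$, writes down candidate exponents using a possibly irrational orthogonal basis, and then carries out a careful continuity/perturbation argument (two $\varepsilon$'s, a rational approximation, a clearing of denominators) to convert these into integer exponents. You instead take the ball $B(\bm p,\varepsilon)$ disjoint from $Z$, inscribe a rational cube $P$ containing $\bm p$ in its interior, pass to the rational polyhedral cone $\mathbb R_{\geq 0}P$, and read off the integer facet normals $\bm w_i$; the functions $f=\bigoplus_i\bm x^{\bm u-\bm w_i}$, $g=f\oplus\bm x^{\bm u}$ then separate $\bm p$ from $Z$ by an immediate inequality check. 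The polyhedral route avoids both the sphere-compactness argument and the two-stage continuity perturbation, and produces integer data from the outset rather than at the end. It is arguably cleaner and equally general; the only care needed (which you flag correctly) is choosing the rational center $\bm q$ and rational half-side $\delta$ so that $P$ both contains $\bm p$ in its interior and stays inside $B(\bm p,\varepsilon)$, and noting that $P$ is full-dimensional and avoids the origin, so that $\mathbb R_{\geq 0}P$ is a full-dimensional pointed rational cone with a finite facet description. Both proofs are valid; yours buys a shorter and more combinatorial separation lemma at the cost of invoking standard facts about rational polyhedral cones.
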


The purpose of this paper is to prove that this conjecture is true.

Note that if Conjecture \ref{conj} is true, then Conjecture \ref{conj0} is also true.
Indeed, let $X, Y$ be 1-dimensional tropical fans in $\mathbb R^n, \mathbb R^m$ respectively, and let $\varphi_Y : \Bypmf \to \ZYposu$ be the weighted evaluation map of $Y$.
Then $\Im(\varphi_Y)$ is Laurent-generated by $\{ \overline{y_1}, \ldots, \overline{y_m} \}$.
If Conjecture \ref{conj} is true, then any semiring homomorphism $\nu : \Im(\varphi_Y) \to \mathbb Z^{X(1)}_{\mathrm{pos}} \cup \{ -\infty \}$ is geometric.
This means that any semiring homomorphism $\nu : \Im(\varphi_Y) \to \Im(\varphi_X)$ is also geometric.

%Later we use the following lemma.

%\begin{lem}
%  \label{L-gen image}
%  Let $R$ be an idempotent semiring, $S=\{a_1 \ldots, a_n\}$ a subset of $R^{\times}$, and $R'$ the subsemiring Laurent-generated by $S$.
%  Then $R'$ is the image of the semiring homomorphism $\varphi : \mathbb B[x_1^{\pm}, \ldots, x_n^{\pm}] \to R$ defined as $\varphi(P) = P(a_1,\ldots, a_n)$.
%\end{lem}

%\begin{proof}
%  Note that $\varphi$ is a semiring homomorphism by Lemma \ref{polyn evalu}.
%  Then this lemma follows from the explicit description of the elements of a Laurent-generated subsemiring.
%\end{proof}

\section{Proof of the main theorem}

In this section, we fix finite sets $A, B$, a Laurent-generated subsemiring $R$ of $\ZAposu$, a Laurent-generating set $\{ F_1, \ldots, F_n \}$ of $R$, and a semiring homomorphism $\nu : R \to \ZBposu$.
We write as $\Bxpmf = \mathbb B[x_1^{\pm}, \ldots, x_n^{\pm}]_{\mathrm{fcn}}$ and $\bm F=(F_1, \ldots, F_n)$.

\subsection{First key lemma}

We define the (at most) 1-dimensional fan $X_{\bm F}$ in $\mathbb R^n$ as follows:
For each $a \in A$, consider the vector
$$\bm d_a := \begin{pmatrix}
    F_1(a) \\ \vdots \\ F_n(a)
\end{pmatrix},$$
and let $\rho_a := \{ t \bm d_a \in \mathbb R^n \ | \ t \geq 0 \}$.
Note that $\rho_a$ is possibly $\{ \mathbf 0 \}$.
%where if that vector is $\mathbf 0$, we define $\rho_a = \{ \mathbf 0 \}$.
Then let $X_{\bm F} := \{ \{ \mathbf 0 \} \} \cup \{ \rho_a \ | \ a \in A \}$.

The following is the first key lemma in the proof of the mein theorem.

\begin{lem}
  \label{key 1}
  %Let $R$ be a Laurent-generated subsemiring of $\ZAposu$, let $\{ F_1, \ldots, F_n \}$ be its Laurent-generating set, and let $\bm F = (F_1, \ldots, F_n)$.
  Let $\varphi : \Bxpmf \to \ZAposu$ be the semiring homomorphism defined as $\varphi(f) = f(F_1, \ldots, F_n)$.
  Then $\Im(\varphi) = R$ and $\Ker(\varphi) = \mathbf E(X_{\bm F})_{\mathbb B}$.
\end{lem}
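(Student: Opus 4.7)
My plan is to treat the two equalities $\Im(\varphi) = R$ and $\Ker(\varphi) = \mathbf E(X_{\bm F})_{\mathbb B}$ separately; each is a direct consequence of lemmas already in hand, so the argument is essentially a packaging of previously established results.

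For the image equality, I would first note that the Laurent-generating set $\{F_1, \ldots, F_n\}$ of $R$ lies in $R^\times = R \cap \mathbb Z^A_0$ by the quoted structural lemma, and in particular in $(\ZAposu)^\times$. Lemma \ref{unit group} then makes $\varphi$ a well-defined semiring homomorphism with $\varphi(\overline{x_i}^{\pm 1}) = F_i^{\pm 1}$. Since $\Im(\varphi)$ is a subsemiring of $\ZAposu$ containing each $F_i^{\pm 1}$, it contains the Laurent-generated subsemiring $R$. The reverse inclusion is the tautological observation that any $f \in \Bxpmf$ is a tropical sum of tropical monomials in the $\overline{x_i}^{\pm 1}$, so $\varphi(f)$ is the same combination applied to the $F_i^{\pm 1}$ and hence lies in $R$.

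For the kernel equality, the bridge between algebra and geometry is the pointwise identity
\[ \varphi(f)(a) = f(F_1(a), \ldots, F_n(a)) = f(\bm d_a), \]
where the first step is Lemma \ref{elementwise} and the second is the definition of $\bm d_a$. Thus $\varphi(f) = \varphi(g)$ is equivalent to the finite system $f(\bm d_a) = g(\bm d_a)$ for all $a \in A$. On the geometric side, $|X_{\bm F}| = \bigcup_{a \in A} \rho_a$, and Lemma \ref{point ray} reduces equality of $f$ and $g$ on each nonzero ray $\rho_a$ to equality at its direction vector $\bm d_a$. So $(f,g) \in \mathbf E(X_{\bm F})_{\mathbb B}$ iff $f(\bm d_a) = g(\bm d_a)$ for every $a$, matching the kernel condition.

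The only edge case worth flagging is $\bm d_a = \mathbf 0$, which gives $\rho_a = \{\mathbf 0\}$. On the geometric side this contributes only the condition $f(\mathbf 0) = g(\mathbf 0)$, which is automatic since every Boolean Laurent polynomial $\bigoplus \bm x^{\bm u_i}$ evaluates to $0$ at the origin; on the algebraic side $F_j(a) = 0$ for all $j$, so $\varphi(f)(a) = 0$ irrespective of $f$. Hence this case is harmless. I do not anticipate a genuine obstacle: once Lemmas \ref{unit group}, \ref{point ray}, and \ref{elementwise} are invoked, the statement reduces to bookkeeping.
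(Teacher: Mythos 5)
Your proof is correct and follows essentially the same route as the paper: for the image equality you spell out what the paper dismisses as ``clear,'' and for the kernel equality you use Lemma~\ref{elementwise} to reduce to the pointwise conditions $f(\bm d_a)=g(\bm d_a)$ and then pass to the rays via Lemma~\ref{point ray}, whereas the paper packages that last step through Lemma~\ref{nonneg hull} (which is itself an immediate consequence of Lemma~\ref{point ray}). Your explicit discussion of the degenerate case $\bm d_a = \mathbf 0$ is a nice touch that the paper handles implicitly through the general statement of Lemma~\ref{nonneg hull}.
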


\begin{proof}
  The equality $\Im(\varphi) = R$ is clear.
  For any pair $(f,g) \in \Bxpmf$, $(f,g) \in \Ker(\varphi)$ if and only if $f(F_1, \ldots, F_n) = g(F_1, \ldots, F_n)$, which means that
  $$f(F_1, \ldots, F_n)(a) = g(F_1, \ldots, F_n)(a)$$
  for any $a \in A$.
  By Lemma \ref{elementwise}, it is equivalent to
  $$f(F_1(a), \ldots, F_n(a)) = g(F_1(a), \ldots, F_n(a))$$
  for any $a \in A$.
  This equality also can be written as $f(\bm d_a) = g(\bm d_a)$.
  Let
  $$Z := \left\{ \bm d_a \in \mathbb R^n \ \middle| \ a \in A \right\}.$$
  Then $(f,g) \in \Ker(\varphi)$ if and only if
  $(f,g) \in \mathbf E(Z)_{\mathbb B},$ hence $\Ker(\varphi) = \mathbf E(Z)_{\mathbb B}$.
  By Lemma \ref{nonneg hull}, $\mathbf E(Z)_{\mathbb B} = \mathbf E(\mathbb R_{\geq 0} Z)_{\mathbb B}$, but $\mathbb R_{\geq 0} Z = |X_{\bm F}|$ by the definition of $X_{\bm F}$.
  Therefore $\mathbf E(\mathbb R_{\geq 0} Z)_{\mathbb B} = \mathbf E(|X_{\bm F}|)_{\mathbb B} = \mathbf E(X_{\bm F})_{\mathbb B}$.
\end{proof}

\begin{cor}
  For any $b \in B$, 
  $$\begin{pmatrix} \nu(F_1)(b) \\ \vdots \\ \nu(F_n)(b) \end{pmatrix} \in \mathbf V(\mathbf E(X_{\bm F})_{\mathbb B}).$$
\end{cor}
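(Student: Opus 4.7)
The plan is to unwind everything to the identity $\nu \circ \varphi = \mathrm{ev}_{\bm p} \circ \varphi$ modulo $\mathbf E(X_{\bm F})_{\mathbb B}$, where $\bm p$ is the desired point, and then apply the definition of $\mathbf V$.

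First I would set $\bm p := (\nu(F_1)(b), \ldots, \nu(F_n)(b)) \in \mathbb R^n$ and verify this actually lives in $\mathbb R^n$ rather than in $(\mathbb R \cup \{-\infty\})^n$. Since $\{ F_1, \ldots, F_n \}$ is a Laurent-generating set of $R$, each $F_i$ is invertible in $R$, hence in $\ZAposu$, i.e.\ $F_i \in \mathbb Z^A_0$. A semiring homomorphism sends invertible elements to invertible elements, so $\nu(F_i) \in (\ZBposu)^{\times} = \mathbb Z^B_0$; in particular $\nu(F_i)(b) \in \mathbb Z$.

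Next I would pick an arbitrary pair $(f,g) \in \mathbf E(X_{\bm F})_{\mathbb B}$ and show $f(\bm p) = g(\bm p)$. By Lemma \ref{key 1}, $(f,g) \in \Ker(\varphi)$, so $f(F_1, \ldots, F_n) = g(F_1, \ldots, F_n)$ in $R$. Applying $\nu$ gives
\[
\nu\bigl(f(F_1, \ldots, F_n)\bigr) = \nu\bigl(g(F_1, \ldots, F_n)\bigr)
\]
in $\ZBposu$. Here the routine observation to make is that, because $f$ is built from $\overline{x_1}^{\pm 1}, \ldots, \overline{x_n}^{\pm 1}$ by finitely many $\oplus$ and $\odot$ and $\nu$ is a semiring homomorphism sending the invertible elements $F_i$ to the invertible elements $\nu(F_i)$, we have $\nu(f(F_1,\ldots,F_n)) = f(\nu(F_1), \ldots, \nu(F_n))$, and similarly for $g$. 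This substitution is well-defined by Lemma \ref{unit group}.

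Finally, evaluating at $b$ and applying Lemma \ref{elementwise} to both sides yields
\[
f(\nu(F_1)(b), \ldots, \nu(F_n)(b)) = g(\nu(F_1)(b), \ldots, \nu(F_n)(b)),
\]
that is, $f(\bm p) = g(\bm p)$. Since $(f,g) \in \mathbf E(X_{\bm F})_{\mathbb B}$ was arbitrary, the definition of $\mathbf V$ gives $\bm p \in \mathbf V(\mathbf E(X_{\bm F})_{\mathbb B})$. There is no real obstacle here; the only point one has to be careful about is that the substitution of $\nu(F_i)$ into Boolean Laurent polynomial functions makes sense, which is precisely guaranteed by the invertibility of $\nu(F_i)$ established in the first step.
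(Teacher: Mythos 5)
Your proof is correct and follows essentially the same route as the paper: apply $\nu$ to the equality $f(F_1,\ldots,F_n)=g(F_1,\ldots,F_n)$ coming from Lemma \ref{key 1}, commute $\nu$ past the Laurent-polynomial function because $\nu$ is a semiring homomorphism taking invertibles to invertibles, and then read off the coordinate at $b$ via Lemma \ref{elementwise}. The only addition you make is the (useful but implicit in the paper) preliminary check that each $\nu(F_i)(b)$ is a genuine integer, so the candidate point actually lies in $\mathbb R^n$.
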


\begin{proof}
  Let $\varphi$ be the semiring homomorphism in Lemma \ref{key 1}.
  Then $\Im(\varphi) = R$ and $\Ker(\varphi) = \mathbf E(X_{\bm F})_{\mathbb B}$.
  Thus $\varphi$ induces an isomorphism
  $$\overline{\varphi} : \Bxpmf / \mathbf E(X_{\bm F})_{\mathbb B} \to R.$$
  There is a composition homomorphism
  $$\nu \circ \overline{\varphi} : \Bxpmf / \mathbf E(X_{\bm F})_{\mathbb B} \to \ZBposu.$$
  Since this homomorphism is well-defined, if $(f,g) \in \mathbf E(X_{\bm F})_{\mathbb B}$, then $(\nu \circ \varphi)(f) = (\nu \circ \varphi)(g)$.
  The left hand side is
  $$(\nu \circ \varphi)(f) = \nu(f(F_1, \ldots, F_n)) = f(\nu(F_1), \ldots, \nu(F_n)),$$
  where the last equality holds because $\nu$ is a semiring homomorphism.
  Thus the equality $(\nu \circ \varphi)(f) = (\nu \circ \varphi)(g)$ means
  $$f(\nu(F_1), \ldots, \nu(F_n)) = g(\nu(F_1), \ldots, \nu(F_n)),$$
  in other words, 
  $$f(\nu(F_1), \ldots, \nu(F_n))(b) = g(\nu(F_1), \ldots, \nu(F_n))(b)$$
  for any $b \in B$.
  By Lemma \ref{elementwise}, this is equivalent to
  $$f(\nu(F_1)(b), \ldots, \nu(F_n)(b)) = g(\nu(F_1)(b), \ldots, \nu(F_n)(b))$$
  for any $b \in B$.
  Since $(f,g)$ is an arbitrary element in $\mathbf E(X_{\bm F})_{\mathbb B}$, this implies that
  $$\begin{pmatrix} \nu(F_1)(b) \\ \vdots \\ \nu(F_n)(b) \end{pmatrix} \in \mathbf V(\mathbf E(X_{\bm F})_{\mathbb B})$$
  for any $b \in B$.
\end{proof}

%Using Lemma \ref{key 1}, we give an observation.
%Let $R$ be a Laurent-generated subsemiring of $\ZAposu$, and let $\nu : R \to \ZBposu$ be a semiring homomorphism.
%Take a Laurent-generating set $\{ F_1, \ldots, F_n \}$ of $R$.

We want to show that $\nu$ is geometric.
That is, for any $b \in B$, we want to show that there exist some $a \in A$ and nonnegative rational number $t$ such that
$$\begin{pmatrix} \nu(F_1)(b) \\ \vdots \\ \nu(F_n)(b) \end{pmatrix} = t \begin{pmatrix} F_1(a) \\ \vdots \\ F_n(a) \end{pmatrix}.$$
This is equivalent to show that
$$\begin{pmatrix} \nu(F_1)(b) \\ \vdots \\ \nu(F_n)(b) \end{pmatrix} \in |X_{\bm F}|$$
for any $b \in B$.
Hence, to prove that our conjecture is true, it is sufficient to show that $\mathbf V(\mathbf E(X)_{\mathbb B}) = |X|$ for any (at most) 1-dimensional fan $X$ in $\mathbb R^n$.

\begin{lem}
  \label{result 1}
  If $\mathbf V(\mathbf E(X)_{\mathbb B}) = |X|$ for any 1-dimensional fan $X$ in $\mathbb R^n$ and for $X = \{\{ \mathbf 0 \}\}$, then Conjecture \ref{conj} is true.
\end{lem}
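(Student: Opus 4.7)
The plan is to deduce Conjecture~\ref{conj} directly from the hypothesis by combining the Corollary to Lemma~\ref{key 1} with the Laurent-generating-set criterion for geometricity. Fix $\nu : R \to \ZBposu$ as in Conjecture~\ref{conj}, choose any Laurent-generating set $\{F_1, \ldots, F_n\}$ of $R$ (which exists because $R$ is finitely Laurent-generated, $\mathbb Z^A_0$ being free of finite rank), and form the associated (at most) 1-dimensional fan $X_{\bm F}$ in $\mathbb R^n$ as in the preceding construction. By the Proposition equating geometricity with respect to a Laurent-generating set and geometricity with respect to any finite subset of $R \cap \mathbb Z^A_0$, it suffices to exhibit, for every $b \in B$, some $a \in A$ and a nonnegative rational number $t$ such that $\nu(F_i)(b) = t\, F_i(a)$ for each $i = 1, \ldots, n$.

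The key steps are as follows. First, put
$$\bm v_b := \begin{pmatrix} \nu(F_1)(b) \\ \vdots \\ \nu(F_n)(b) \end{pmatrix}.$$
Since each $F_i$ lies in the unit group $R^{\times} \subset \mathbb Z^A_0$, its image $\nu(F_i)$ lies in $(\ZBposu)^{\times} = \mathbb Z^B_0$, so the entries $\nu(F_i)(b)$ are genuine integers and $\bm v_b \in \mathbb R^n$. The Corollary to Lemma~\ref{key 1} then yields $\bm v_b \in \mathbf V(\mathbf E(X_{\bm F})_{\mathbb B})$. Second, apply the standing hypothesis to the fan $X_{\bm F}$ (the $X = \{\{ \mathbf 0 \}\}$ case when every $\bm d_a = \mathbf 0$, and the 1-dimensional-fan case otherwise) to conclude $\mathbf V(\mathbf E(X_{\bm F})_{\mathbb B}) = |X_{\bm F}|$. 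Hence $\bm v_b \in |X_{\bm F}|$, and the definition of $X_{\bm F}$ produces $a \in A$ and $t \geq 0$ with $\bm v_b = t\bm d_a$.

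Third, I would verify that $t$ can be taken to be rational. If $\bm d_a = \mathbf 0$ then $\bm v_b = \mathbf 0$ and we may take $t = 0$ with any $a \in A$. Otherwise some coordinate $F_j(a)$ is a nonzero integer, forcing $t = \nu(F_j)(b) / F_j(a) \in \mathbb Q_{\geq 0}$. In either case $\nu(F_i)(b) = t F_i(a)$ holds for all $i$, so $\nu$ is geometric with respect to the Laurent-generating set $\{F_1, \ldots, F_n\}$, and therefore geometric in the sense of the Definition.

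There is essentially no hard step: the argument is pure bookkeeping, since the substantive content of the main theorem has been packaged into the assumed equality $\mathbf V(\mathbf E(X)_{\mathbb B}) = |X|$. The only things to watch are (i) confirming that $\bm v_b$ is a real vector (which uses that the $F_i$ are units), and (ii) handling the degenerate case $X_{\bm F} = \{\{\mathbf 0\}\}$, which is precisely why the hypothesis explicitly mentions that fan.
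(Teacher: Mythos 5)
Your proof is correct and follows essentially the same route as the paper: the paper's own argument immediately preceding the lemma is exactly the chain Corollary-to-Lemma~\ref{key 1} $\Rightarrow \bm v_b \in \mathbf V(\mathbf E(X_{\bm F})_{\mathbb B})$, then applying the hypothesis to $X_{\bm F}$ (noting it may degenerate to $\{\{\mathbf 0\}\}$) to get $\bm v_b \in |X_{\bm F}|$, then unwinding the definition of $|X_{\bm F}|$. You make explicit two points the paper leaves tacit --- that $\nu(F_i) \in \mathbb Z^B_0$ because $\nu$ preserves units (so the $\nu(F_i)(b)$ are honest integers), and that the scalar $t$ produced by membership in $|X_{\bm F}|$ is automatically rational since it is a ratio of integers when $\bm d_a \neq \mathbf 0$ --- both of which are correct and worth spelling out.
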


\subsection{Second key lemma and the proof of the main theorem}

Let $\rho$ be a ray in $\mathbb R^n$ and let $\bm d$ be a direction vector of it.
For $0 <\varepsilon < 1$, consider the following set:
$$U(\rho, \varepsilon) = \left\{ \bm p \in \mathbb R^n \setminus \{ \mathbf 0 \} \ \middle| \ \frac{\bm d \cdot \bm p}{\norm{\bm d} \norm{\bm p}} > \varepsilon \right\},$$
where $\cdot$ is the standard inner product and $\norm{\bm p} = \sqrt{\bm p \cdot \bm p}$.
It is easily checked that $U(\rho, \varepsilon)$ does not depend on the choice of $\bm d$.
Also it is easily checked that for any $\bm p \in \mathbb R^n \setminus \{ \mathbf 0 \}$,
$$\bm p \in U(\rho, \varepsilon) \Longleftrightarrow \mathbb R_{>0} \bm p \subset U(\rho, \varepsilon),$$
%We call a set of this form a \textit{open cone neighborhood} of $\rho$.
where $\mathbb R_{>0} \bm p = \{ t\bm p \ | \ t > 0 \}$.

For a subset $Z$ in $\mathbb R^n$, we consider the following condition $(*)$:
$$(*) \quad \text{$Z$ is nonempty closed subset of $\mathbb R^n$, and $\mathbb R_{\geq 0} Z = Z$},$$
where the topology is the Euclidean topology.
Note that both any 1-dimensional fan in $\mathbb R^n$ and $\{ \mathbf 0 \}$ satisfy $(*)$.
Also note that any set $Z$ satisfying $(*)$ includes $\mathbf 0$ because $Z$ is nonempty and $\mathbf 0 \in \mathbb R_{\geq 0}Z = Z$.

\begin{lem}
  \label{open cone nbd}
  Let $Z$ be a proper subset of $\mathbb R^n$ satisfying $(*)$.
  Take $\bm d \in \mathbb R^n \setminus Z$ and let $\rho$ be the ray spanned by $\bm d$.
  Then there exists $0 < \varepsilon < 1$ such that $U(\rho, \varepsilon) \cap Z = \emptyset$.
\end{lem}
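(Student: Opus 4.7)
The plan is to argue by contradiction using compactness of the unit sphere together with the two scaling properties in play. First I would observe that $\bm d \neq \mathbf 0$: any $Z$ satisfying $(*)$ contains $\mathbf 0$ (pick any $\bm p \in Z$ and write $\mathbf 0 = 0 \cdot \bm p$, using $\mathbb R_{\geq 0} Z = Z$), as is already noted just before the statement, so from $\bm d \notin Z$ we get $\bm d \neq \mathbf 0$ and $\rho$ is a genuine ray on which $U(\rho,\varepsilon)$ is defined.

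Now I would suppose for contradiction that $U(\rho, \varepsilon) \cap Z \neq \emptyset$ for every $\varepsilon \in (0,1)$. Take a sequence $\varepsilon_k \nearrow 1$ and choose $\bm q_k \in U(\rho, \varepsilon_k) \cap Z$; these are nonzero since $\mathbf 0 \notin U(\rho, \varepsilon_k)$. Set $\bm p_k := \bm q_k / \norm{\bm q_k}$. Because $\mathbb R_{\geq 0} Z = Z$, we still have $\bm p_k \in Z$; and because the inequality defining $U(\rho, \varepsilon)$ only depends on the direction of its argument (a fact already used in the text), we still have $\bm p_k \in U(\rho, \varepsilon_k)$.

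Since $\{\bm p_k\}$ lies on the compact unit sphere $S^{n-1}$, I pass to a convergent subsequence $\bm p_k \to \bm p_\infty$ with $\norm{\bm p_\infty} = 1$. Passing to the limit in $\bm d \cdot \bm p_k > \varepsilon_k \norm{\bm d}$ yields $\bm d \cdot \bm p_\infty \geq \norm{\bm d}$, which combined with Cauchy--Schwarz and $\norm{\bm p_\infty}=1$ is the equality case and forces $\bm p_\infty = \bm d / \norm{\bm d}$. Since $Z$ is closed and $\bm p_k \in Z$ for all $k$, we get $\bm p_\infty \in Z$, hence $\bm d = \norm{\bm d} \bm p_\infty \in \mathbb R_{\geq 0} Z = Z$, contradicting $\bm d \notin Z$.

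I do not expect a serious obstacle: this is essentially a standard compactness argument on $S^{n-1}$. The one subtle point to be careful about is using the two scaling features in tandem, namely that $\mathbb R_{\geq 0} Z = Z$ allows us to rescale points of $Z$ onto the unit sphere without leaving $Z$, while the scaling invariance of $U(\rho,\varepsilon)$ allows the same normalization without leaving the open cone, so that the resulting sequence lives in the compact set $S^{n-1} \cap Z$ to which the limit argument can be applied.
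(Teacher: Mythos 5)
Your proof is correct but takes a genuinely different route from the paper's. You argue by contradiction and compactness: if every cone $U(\rho,\varepsilon)$ met $Z$, you normalize witnesses onto $S^{n-1}$ (using both scaling invariances), pass to a convergent subsequence, and use the equality case of Cauchy--Schwarz in the limit to conclude $\bm d/\norm{\bm d}\in Z$, hence $\bm d\in Z$, a contradiction. The paper's proof is instead constructive: it picks $\delta>0$ with $B(\bm d,\delta)\cap Z=\emptyset$ by closedness, sets $\varepsilon=\sqrt{\norm{\bm d}^2-\delta^2}/\norm{\bm d}$, and shows the exact identity $U(\rho,\varepsilon)=\mathbb R_{>0}B(\bm d,\delta)$, so that any point of $U(\rho,\varepsilon)\cap Z$ would rescale into $B(\bm d,\delta)\cap Z$. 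Your argument is shorter and avoids the algebra verifying that identity, at the cost of being nonconstructive (no explicit $\varepsilon$); the paper's version gives a concrete $\varepsilon$ and the geometric description of $U(\rho,\varepsilon)$ as the positive cone over an open ball, which is a pleasant reusable fact. Both are sound; every step of yours checks out, including the careful point that both $Z$ and $U(\rho,\varepsilon)$ are stable under positive scaling, so the normalized points stay in both sets.
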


\begin{proof}
  Since $Z$ is closed, there exists some $\delta > 0$ such that the open ball
  $$B(\bm d, \delta) := \{ \bm p \in \mathbb R^n \ | \ \norm{\bm p - \bm d} < \delta \}$$
  does not intersect to $Z$.
  We may assume that $\delta < \norm{\bm d}$.
  Let $\varepsilon = \frac{\sqrt{\norm{\bm d}^2 - \delta^2}}{\norm{\bm d}}$, which satisfies $0 < \varepsilon < 1$.
  Note that $\delta = \norm{\bm d}\sqrt{1- \varepsilon^2}$.
  We now show that
  $$U(\rho, \varepsilon) = \mathbb R_{>0} B(\bm d, \delta).$$

  Take $\bm p \in U(\rho, \varepsilon)$.
  The orthogonal projection of $\bm d$ onto $\mathbb R_{>0} \bm p$ is 
  $$\bm q := \frac{\bm d \cdot \bm p}{\norm{\bm p}^2} \bm p.$$
  Note that the coefficient is positive since if $\bm d \cdot \bm p \leq 0$, then $\bm p \not\in U(\rho, \varepsilon)$.
  The distance between $\bm d$ and $\bm q$ is estimated as
  $$\begin{aligned}
    \norm{\bm d - \bm q}^2 &= \norm{\bm d - \frac{\bm d \cdot \bm p}{||\bm p||^2} \bm p}^2\\
    &= \norm{\bm d}^2 - 2 \cdot \frac{(\bm d \cdot \bm p)^2}{\norm{\bm p}^2} + \frac{(\bm d \cdot \bm p)^2}{\norm{\bm p}^2}\\
    &= \norm{\bm d}^2 - \frac{(\bm d \cdot \bm p)^2}{\norm{\bm p}^2}\\
    &< \norm{\bm d}^2 - \frac {\left( \varepsilon \norm{\bm d}\norm{\bm p} \right)^2}{\norm{\bm p}^2}\\
    &= (1- \varepsilon^2)\norm{\bm d}^2\\
  %  &= \left( 1 - \frac{\norm{\bm d}^2 - \delta^2}{\norm{\bm d}^2} \right) \norm{\bm d}^2\\
    &= \delta^2.
  \end{aligned}$$
  Hence $\bm q \in B(\bm d, \delta)$, which means that $\bm p \in \mathbb R_{>0} B(\bm d, \delta)$.

  To show the converse, it is sufficient to show that $U(\rho, \varepsilon) \supset B(\bm d, \delta)$.
  Take $\bm p \in B(\bm d, \delta)$.
  Note that $\bm p \neq \mathbf 0$ since $\delta < \norm{\bm d}$.
  Then
%  $$\begin{aligned}
%    \frac {\bm d \cdot \bm p}{\norm{\bm d}\norm{\bm p}} &= \frac 12 \cdot \frac{\norm{\bm d}^2 + \norm{\bm p}^2 - \norm{\bm d - \bm p}^2}{\norm{\bm d}\norm{\bm p}}\\
%    &> \frac 12 \cdot \frac{\norm{\bm d}^2 + \norm{\bm p}^2 - \delta^2}{\norm{\bm d}\norm{\bm p}}\\
%    &= \frac 12 \cdot \frac{\norm{\bm d}^2 + \norm{\bm p}^2 - \norm{\bm d}^2(1-\varepsilon^2)}{\norm{\bm d}\norm{\bm p}}\\
%    &= \frac 12 \cdot \frac{\varepsilon^2 \norm{\bm d}^2 + \norm{\bm p}^2}{\norm{\bm d}\norm{\bm p}}\\
%    &= \frac 12 \left( \varepsilon^2 \frac {\norm{\bm d}}{\norm{\bm p}} + \frac {\norm{\bm p}}{\norm{\bm d}} \right)\\
%    &\geq \sqrt{\varepsilon^2 \frac {\norm{\bm d}}{\norm{\bm p}} \cdot \frac {\norm{\bm p}}{\norm{\bm d}}}\\
%    &= \varepsilon.
%  \end{aligned}$$
  $$\begin{aligned}
    {\bm d \cdot \bm p} &= \frac 12 (\norm{\bm d}^2 + \norm{\bm p}^2 - \norm{\bm d - \bm p}^2)\\
    &> \frac 12 (\norm{\bm d}^2 + \norm{\bm p}^2 - \delta^2)\\
    &= \frac 12 (\norm{\bm d}^2 + \norm{\bm p}^2 - \norm{\bm d}^2(1-\varepsilon^2))\\
    &= \frac 12 (\varepsilon^2 \norm{\bm d}^2 + \norm{\bm p}^2)\\
    &\geq \sqrt{\varepsilon^2 \norm{\bm d}^2  {\norm{\bm p}}^2}\\
    &= \varepsilon \norm{\bm d}\norm{\bm p}.
  \end{aligned}$$ 
  Hence $\bm p \in U(\rho, \varepsilon)$.
  Therefore $U(\rho, \varepsilon) = \mathbb R_{>0} B(\bm d, \delta)$.

  Assume that $U(\rho, \varepsilon) \cap Z \neq \emptyset$ and take $\bm p \in U(\rho, \varepsilon) \cap Z$.
  Then there exist some $t > 0$ such that $t \bm p \in B(\bm d, \delta)$.
  Since $\bm p \in Z$, $t\bm p$ is also in $Z$ by $(*)$, which contradicts to $B(\bm d, \delta) \cap Z = \emptyset$.
\end{proof}

The following is our second key lemma.

\begin{lem}
  \label{key 2}
  Let $\rho$ be a ray in $\mathbb R^n$ and let $0 < \varepsilon < 1$ be a real number.
  Then there exist two Boolean Laurent polynomial functions $f,g \in \Bxpmf$ such that
  \begin{enumerate}[(1)]
    \item $f|_{\rho} \neq g|_{\rho}$, and
    \item $f|_{\mathbb R^n \setminus U(\rho, \varepsilon)} = g|_{\mathbb R^n \setminus U(\rho, \varepsilon)}$.
  \end{enumerate}
\end{lem}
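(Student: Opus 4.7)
The plan is to exhibit $f$ and $g$ whose strict disagreement locus is an open rational polyhedral cone squeezed between $\rho$ and $U(\rho, \varepsilon)$. More precisely, I will construct a full-dimensional pointed rational polyhedral cone $C \subset \mathbb R^n$ with $\rho \setminus \{\mathbf 0\} \subset C^{\circ}$ and $C \setminus \{\mathbf 0\} \subset U(\rho, \varepsilon)$, where $C^{\circ}$ denotes the topological interior. By the Minkowski-Weyl theorem, $C$ admits a presentation $C = \{\bm x \in \mathbb R^n \mid \bm v_j \cdot \bm x \geq 0,\ j = 1, \ldots, k\}$ with $\bm v_1, \ldots, \bm v_k \in \mathbb Z^n$. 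I then set $f = \overline P$ and $g = \overline Q$ for the Boolean Laurent polynomials
$$P = \bm x^{\mathbf 0} \oplus \bm x^{-\bm v_1} \oplus \cdots \oplus \bm x^{-\bm v_k}, \qquad Q = \bm x^{-\bm v_1} \oplus \cdots \oplus \bm x^{-\bm v_k}.$$
Then $f(\bm x) = \max(0, \max_j(-\bm v_j \cdot \bm x))$ strictly exceeds $g(\bm x) = \max_j(-\bm v_j \cdot \bm x)$ exactly on $C^{\circ} = \{\bm x \mid \bm v_j \cdot \bm x > 0 \text{ for all } j\}$, and $f = g$ off $C^{\circ}$. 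Since $\rho \setminus \{\mathbf 0\} \subset C^{\circ}$, condition (1) follows; since $\mathbb R^n \setminus U(\rho, \varepsilon) \subset \mathbb R^n \setminus C^{\circ}$, condition (2) follows.

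To construct $C$, I first note that $U(\rho, \varepsilon) \cup \{\mathbf 0\}$ is a convex cone: for a unit direction $\bm d$ of $\rho$ and $\bm p, \bm q \in U(\rho, \varepsilon)$ with $s, t \geq 0$ not both zero, $\bm d \cdot (s\bm p + t\bm q) > \varepsilon(s\norm{\bm p} + t\norm{\bm q}) \geq \varepsilon \norm{s\bm p + t\bm q}$. Using the openness of $U(\rho, \varepsilon)$ and the density of $\mathbb Q^n$ in $\mathbb R^n$, I choose rational, linearly independent $\bm w_1, \ldots, \bm w_n \in U(\rho, \varepsilon)$ for which $\bm d$ is a strictly positive combination $\bm d = \sum_i t_i \bm w_i$ with all $t_i > 0$. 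A concrete recipe: extend $\bm d/\norm{\bm d}$ to an orthonormal frame $(\bm e_1, \ldots, \bm e_n)$, set $\bm a_i := \bm d + \delta \bm e_i$ for $i \geq 2$ and $\bm a_1 := \bm d - \delta(\bm e_2 + \cdots + \bm e_n)$ with $\delta > 0$ chosen small enough that each $\bm a_i$ lies in $U(\rho, \varepsilon)$; note $\sum_i \bm a_i = n\bm d$ and the $\bm a_i$ are linearly independent, then perturb each $\bm a_i$ to a nearby rational $\bm w_i$ (linear independence and the strict-positivity condition survive small perturbations). Setting $C := \mathrm{cone}(\bm w_1, \ldots, \bm w_n)$, convexity of $U(\rho, \varepsilon) \cup \{\mathbf 0\}$ forces $C \setminus \{\mathbf 0\} \subset U(\rho, \varepsilon)$, and the strict positivity of the $t_i$ places $\bm d$ in $C^{\circ}$; scaling invariance of $C^{\circ}$ then extends this to all of $\rho \setminus \{\mathbf 0\}$.

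The main obstacle is precisely this geometric step of squeezing a rational polyhedral cone $C$ between a possibly irrational ray $\rho$ and the open Euclidean cone $U(\rho, \varepsilon)$; everything afterwards — the Minkowski-Weyl dualization producing the integer exponents $-\bm v_j$ and the routine unpacking of tropical maxima to verify (1) and (2) — is formal.
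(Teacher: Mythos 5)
Your proof is correct, and it takes a genuinely different route from the paper's. Both arguments ultimately squeeze a rational polyhedral cone between $\rho$ and $U(\rho,\varepsilon)$, but they package this very differently. The paper works with an orthogonal frame $\bm d_1, \ldots, \bm d_n$ adapted to $\rho$, proves by a two-layer compactness argument (first the infimum of $h$ on $\overline S$, then the function $\mathcal H$ on $S^{n-1}\setminus U$) that a suitable constant $K$ exists and survives a rational perturbation, and sets $f = \overline{\bm x^{N\bm e_2} \oplus \bm x^{-N\bm e_2} \oplus \cdots}$, $g = \overline{\bm x^{N\bm e_1}} \oplus f$, so that $f$ is a tropical distance to the span of $\rho$ and $g$ tests whether $\bm x^{N\bm e_1}$ wins. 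You instead construct the cone directly: you observe $U(\rho,\varepsilon)\cup\{\mathbf 0\}$ is a convex cone, place a simplicial rational cone $C$ inside it with $\bm d \in C^\circ$ by perturbing the explicit frame $\bm a_1,\ldots,\bm a_n$ to rational generators (linear independence and the strict positivity of the barycentric coefficients surviving the perturbation), and then dualize via Minkowski--Weyl to obtain the facet normals $\bm v_j$, from which $f = \max(0, \max_j(-\bm v_j\cdot\bm x))$ and $g = \max_j(-\bm v_j\cdot\bm x)$ differ exactly on $C^\circ$. This buys a cleaner reduction: the single geometric input (convexity of $U\cup\{\mathbf 0\}$ plus perturbation stability) replaces the paper's two compactness steps, and the verification of (1) and (2) becomes purely formal once $C$ is in hand. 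One small point to make explicit if you were to write this up: the identity $C^\circ = \{\bm x \mid \bm v_j \cdot \bm x > 0 \text{ for all } j\}$ requires that no $\bm v_j$ be the zero vector (which of course one can always arrange), but this is the only hygiene condition needed and the argument is otherwise airtight.
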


\begin{proof}
  Let $U = U(\rho, \varepsilon)$.
  Let $\bm d_1$ be a direction vector of $\rho$.
  Let $\bm d_2, \ldots, \bm d_n$ be a basis of the hyperplane orthogonal to $\bm d_1$.
  We claim that there exists $K>0$ such that for any $\bm p \in \mathbb R^n \setminus (U \cup \{ \mathbf 0 \})$,
  \begin{equation}\bm d_1 \cdot \bm p < K \max_{i=2, \ldots, n}  |\bm d_i \cdot \bm p|. \tag{$**$} \end{equation}
  If $\bm d_1 \cdot \bm p < 0$, $(**)$ is clear.
  Also $(**)$ holds if $\bm d_1 \cdot \bm p = 0$ because, otherwise, we have $\bm d_i \cdot \bm p = 0$ for $i=2, \ldots, n$, which means $\bm p= \mathbf 0$.
  This is a contradiction.
  Then we consider the vectors $\bm p$ such that $\bm d_1 \cdot \bm p > 0$.
  Let
  $$H = \{ \bm p \in \mathbb R^n \ | \ \bm d_1 \cdot \bm p > 0 \}.$$
  Consider the function
  $$h(\bm p) = \frac{\max_{i=2,\ldots, n} |\bm d_i \cdot \bm p|}{\bm d_1 \cdot \bm p}$$
  on $H \setminus U$.
  To show the claim, it is sufficient to show that the infimum of $h$ is positive.
  Indeed, if $\inf h = k > 0$, $h(\bm p) > \frac k2$ for any $\bm p \in H \setminus U$, and hence
  $$\bm d_1 \cdot \bm p_1 < \frac 2k \max_{i=2, \ldots, n}  |\bm d_i \cdot \bm p|.$$

  Since $h(t \bm p) = h(\bm p)$ for any $t > 0$ and $\bm p \in H \setminus U$, we may restrict the domain of $h$ to $S := S^{n-1} \cap (H \setminus U)$, where $S^{n-1}$ is the $(n-1)$-dimensional unit sphere in $\mathbb R^n$.
  Let
  $$H_0 := \{ \bm p \in \mathbb R^n \ | \ \bm d_1 \cdot \bm p = 0 \}.$$
  Then the closure $\overline S$ is
  $$\overline S = S \sqcup (S^{n-1} \cap H_0),$$
  which is compact.
  If $\bm p \in S$ tends to a point $\bm p_0$ in $S^{n-1} \cap H_0$, the denominator of $f(\bm p)$ converge to $0$.
  On the other hand, the numerator of $h(\bm p)$ does not converge to 0 because, otherwise,
  %then $h$ diverge to infinity because otherwise
  %$$\lim_{\bm p \to \bm p_0} \max_{i=2,\ldots,n} |\bm d_i \cdot \bm p| = 0.$$
  $\bm d_i \cdot \bm p_0 = 0$ for $i=2, \ldots, n$.
  Also $\bm d_1 \cdot \bm p_0 = 0$ since $\bm p_0 \in H_0$.
  Hence $\bm p_0=\mathbf 0$, which contradicts to $\bm p_0 \in S^{n-1}$.
  Therefore $\lim_{\bm p \to \bm p_0}h(\bm p) = \infty$.

  The value of $h$ is always nonnegative, hence $h$ is bounded from below.
  Since $\overline S$ is compact, $h$ attains its minimum at a point $\bm p_1$ in $S$.
  If $h(\bm p_1) = 0$, then $\bm d_i \cdot \bm p_1 = 0$ for $i=2, \ldots, n$.
  Also $\bm d_1 \cdot \bm p_1 > 0$ since $\bm p_1 \in H$.
  Hence $\bm p_1$ is a positive scalar multiple of $\bm d_1$, which contradicts to $\bm p_1 \not\in U$.
  Therefore $h(\bm p_1)>0$, which means that $\inf h >0$.

  Let $K>0$ be a number satisfying $(**)$ for any $\bm p \in \mathbb R^n \setminus (U \cup \{ \mathbf 0 \})$.
  If $\bm d_1, \ldots, \bm d_n$ are integer vectors and $K$ is an integer, then the functions
  $$f(\bm x) = \overline{\bm x^{K \bm d_2} \oplus \bm x^{-K \bm d_2} \oplus \cdots \oplus \bm x^{K \bm d_n} \oplus \bm x^{-K \bm d_n}}$$
  and
  $$g(\bm x) = \overline{\bm x^{\bm d_1}} \oplus f(\bm x)$$
  have the desired properties.
  However, we cannot always take $\bm d_i$'s as integer vectors.
  Thus we change the vectors as follows.

  Consider the following function on $(\mathbb R^n)^{n+1}$:
  $$\mathcal H(\bm p, \bm x_1, \ldots, \bm x_n) = \bm p \cdot \bm x_1 - \max_{i=2, \ldots, n} |\bm p \cdot \bm x_i|$$
  Then $\mathcal H$ is continuous.
  We know that
  $$ \mathcal H(\bm d_1, \bm d_1, K \bm d_2 \ldots, K \bm d_n) = \bm d_1 \cdot \bm d_1 >0.$$
  Hence there exists some $\varepsilon_1>0$ such that
  $$\norm{\bm x_1 - \bm d_1} < \varepsilon_1, \ \norm{\bm x_i - K \bm d_i}< \varepsilon_1 \ (i=2, \ldots, n) \Longrightarrow \mathcal H(\bm d_1, \bm x_1, \ldots, \bm x_n) >0.$$
  On the other hand, we know that, for any $\bm p \in \mathbb R^n \setminus (U \cup \{ \mathbf 0 \})$,
  \begin{equation*}\mathcal H(\bm p, \bm d_1, K \bm d_2, \ldots, K \bm d_n) < 0. \end{equation*}
  In particular, this inequality holds for any $\bm p \in S^{n-1} \setminus U$.
  Since $S^{n-1} \setminus U$ is compact, the function
  $$S^{n-1} \setminus U \to \mathbb R, \qquad \bm p \mapsto \mathcal H(\bm p, \bm d_1, K \bm d_2, \ldots, K \bm d_n)$$
  attains its maximum at some point in $S^{n-1} \setminus U$.
  This means that
  $$\sup_{\bm p \in S^{n-1} \setminus U} \mathcal H(\bm p, \bm d_1, K \bm d_2,  \ldots, K \bm d_n) < 0.$$
  Since $S^{n-1} \setminus U$ is compact, the function
  $$(\bm x_1, \ldots, \bm x_n) \mapsto \sup_{\bm p \in S^{n-1} \setminus U} \mathcal H(\bm p, \bm x_1, \ldots, \bm x_n)$$
  is continuous. %参考: https://williewong.wordpress.com/2011/11/01/continuity-of-the-infimum/
  Hence there exists some $\varepsilon_2 > 0$ such that
  \begin{multline*}
    \bm p \in S^{n-1} \setminus U, \ \norm{\bm x_1 - \bm d_1}< \varepsilon_2, \ \norm{\bm x_i - K \bm d_i}< \varepsilon_2 \ (i=2, \ldots, n) \\
    \Longrightarrow \mathcal H(\bm p, \bm x_1, \ldots, \bm x_n) < 0.
  \end{multline*}
  Since $\mathcal H(t \bm p, \bm x_1, \ldots, \bm x_n) = t \mathcal H(\bm p, \bm x_1, \ldots, \bm x_n)$ for any $t>0$ and $\bm p, \bm x_1, \ldots, \bm x_n \in \mathbb R^n$, we have
  \begin{multline*}
    \bm p \in \mathbb R^n \setminus (U \cup \{ \mathbf 0 \}), \ \norm{\bm x_1 - \bm d_1}< \varepsilon_2, \ \norm{\bm x_i - K \bm d_i}< \varepsilon_2 \ (i=2, \ldots, n) \\
    \Longrightarrow \mathcal H(\bm p, \bm x_1, \ldots, \bm x_n) < 0.
  \end{multline*}

  Take vectors $\bm e_1, \ldots, \bm e_n \in \mathbb Q^n$ such that
  $$\norm{\bm e_1 - \bm d_1}, \norm{\bm e_i - K \bm d_i} < \min \{ \varepsilon_1, \varepsilon_2 \} \ (i=2, \ldots, n).$$
  Then
  $$\mathcal H(\bm d_1, \bm e_1, \ldots, \bm e_n) >0$$
  and
  $$\bm p \in \mathbb R^n \setminus (U \cup \{ \mathbf 0 \}) \Longrightarrow \mathcal H(\bm p, \bm e_1, \ldots, \bm e_n) < 0.$$
  Take a positive integer $N$ so that $N\bm e_1, \ldots, N\bm e_n$ are integer vectors.
  Note that $\mathcal H(\bm p, N \bm e_1, \ldots, N \bm e_n) = N\mathcal H(\bm p, \bm e_1, \ldots, \bm e_n)$ for any $\bm p \in \mathbb R^n$.
  Hence we have
  $$\mathcal H(\bm d_1, N\bm e_1, \ldots, N\bm e_n) >0$$
  and
  $$\bm p \in \mathbb R^n \setminus (U \cup \{ \mathbf 0 \}) \Longrightarrow \mathcal H(\bm p, N\bm e_1, \ldots, N\bm e_n) < 0.$$
  Therefore the functions
  $$f(\bm x) = \overline{\bm x^{N \bm e_2} \oplus \bm x^{-N \bm e_2} \oplus \cdots \oplus \bm x^{N \bm e_n} \oplus \bm x^{-N \bm e_n}}$$
  and
  $$g(\bm x) = \overline{\bm x^{N \bm e_1}} \oplus f(\bm x)$$
  have the desired properties.
\end{proof}

\begin{cor}
  \label{result 2}
  For any subset $Z$ in $\mathbb R^n$ satisfying $(*)$, $\mathbf V(\mathbf E(Z)_{\mathbb B}) = Z$.
\end{cor}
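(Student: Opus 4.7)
The inclusion $Z \subseteq \mathbf{V}(\mathbf{E}(Z)_{\mathbb{B}})$ is immediate from the definitions: if $\bm{p} \in Z$ and $(f,g) \in \mathbf{E}(Z)_{\mathbb{B}}$, then $f|_Z = g|_Z$ forces $f(\bm{p}) = g(\bm{p})$. So the real content is the reverse inclusion, which I would prove by contrapositive: given $\bm{p} \in \mathbb{R}^n \setminus Z$, produce a pair $(f,g) \in \mathbf{E}(Z)_{\mathbb{B}}$ with $f(\bm{p}) \neq g(\bm{p})$.

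First dispose of the trivial case $Z = \mathbb{R}^n$, for which the claim is vacuous. Otherwise $Z$ is a proper subset of $\mathbb{R}^n$ satisfying $(*)$, and I note that $\bm{p} \neq \mathbf{0}$ since condition $(*)$ forces $\mathbf{0} \in Z$. Let $\rho$ be the ray spanned by $\bm{p}$.

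The plan is now to glue together the two geometric ingredients. Since $\bm{p} \notin Z$, Lemma \ref{open cone nbd} produces some $0 < \varepsilon < 1$ such that $U(\rho, \varepsilon) \cap Z = \emptyset$; that is, an open cone around $\rho$ that avoids $Z$. Feeding this ray $\rho$ and this $\varepsilon$ into Lemma \ref{key 2} yields Boolean Laurent polynomial functions $f, g \in \Bxpmf$ which agree on $\mathbb{R}^n \setminus U(\rho, \varepsilon)$ yet disagree on $\rho$. The first property, combined with $Z \subseteq \mathbb{R}^n \setminus U(\rho, \varepsilon)$, gives $f|_Z = g|_Z$, so $(f,g) \in \mathbf{E}(Z)_{\mathbb{B}}$. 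For the second, $f|_\rho \neq g|_\rho$ combined with Lemma \ref{ft tf} (which gives $f(t\bm{p}) = tf(\bm{p})$ and similarly for $g$, for all $t \geq 0$) transfers the disagreement to the spanning vector itself, yielding $f(\bm{p}) \neq g(\bm{p})$. Hence $\bm{p} \notin \mathbf{V}(\mathbf{E}(Z)_{\mathbb{B}})$, completing the proof.

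The argument itself is essentially a one-paragraph stitch: all the difficulty has already been absorbed into Lemma \ref{open cone nbd} (carving out a cone around $\bm{p}$ disjoint from $Z$, which uses only that $Z$ is closed and scaling-invariant) and Lemma \ref{key 2} (the delicate construction of integer-exponent tropical Laurent polynomials whose values distinguish this cone from its complement). The only subtlety in the wrap-up is to remember to handle the degenerate case $Z = \mathbb{R}^n$ separately before invoking Lemma \ref{open cone nbd}, which requires $Z$ to be proper.
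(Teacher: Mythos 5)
Your proof is correct and follows essentially the same route as the paper: the forward inclusion is definitional, and the reverse inclusion is obtained by combining Lemma~\ref{open cone nbd} and Lemma~\ref{key 2} to build a pair $(f,g)\in\mathbf E(Z)_{\mathbb B}$ that separates any $\bm p\notin Z$. The only cosmetic differences are that you frame the argument as a contrapositive rather than a contradiction, you make the degenerate case $Z=\mathbb R^n$ and the fact $\bm p\neq\mathbf 0$ explicit, and you invoke Lemma~\ref{ft tf} directly where the paper cites its immediate corollary Lemma~\ref{point ray}.
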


\begin{proof}
  %If $Z = \mathbb R^n$, this is clear. 
  %Assume that $Z \subsetneq \mathbb R^n$.
  The inclusion $\supset$ is clear by definition.
  Conversely, take $\bm p \in \mathbf V(\mathbf E(Z)_{\mathbb B})$.
  Assume that $\bm p \not\in Z$.
  Then $Z$ is a proper subset of $\mathbb R^n$.
  Let $\rho$ be the ray spanned by $\bm p$.
  By Lemma \ref{open cone nbd}, there exists $0 < \varepsilon < 1$ such that $U(\rho, \varepsilon) \cap Z = \emptyset$.
  By Lemma \ref{key 2}, there exist two functions $f,g \in \Bxpmf$ such that
  \begin{enumerate}[(1)]
    \item $f|_{\rho} \neq g|_{\rho}$, and
    \item $f|_{\mathbb R^n \setminus U(\rho, \varepsilon)} = g|_{\mathbb R^n \setminus U(\rho, \varepsilon)}$.
  \end{enumerate}
  By (2) and $U(\rho, \varepsilon) \cap Z = \emptyset$, $f|_{Z} = g|_{Z}$.
  Hence $(f,g) \in \mathbf E(Z)_{\mathbb B}$.
  Since $\bm p \in \mathbf V(\mathbf E(Z)_{\mathbb B})$, $f(\bm p) = g(\bm p)$, and then $f|_{\rho} = g|_{\rho}$ by Lemma \ref{point ray}.
  It contradicts to (1).
\end{proof}

Now, the proof of our main theorem is completed.

\begin{thm}
  \label{main}
  Let $A,B$ be finite sets, let $R$ be a Laurent-generated subsemiring of $\ZAposu$, and let $\nu : R \to \ZBposu$ be a semiring homomorphism.
  Then $\nu$ is geometric.
\end{thm}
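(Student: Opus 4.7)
The plan is to verify that $\nu$ is geometric with respect to the fixed Laurent-generating set $\{F_1,\ldots,F_n\}$; by the preceding discussion, this is exactly what it means for $\nu$ to be geometric. All the substantive machinery is already in place, so the main theorem reduces to a direct combination of Lemma \ref{key 1} and Corollary \ref{result 2}, followed by a brief rationality check.

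Concretely, I would first invoke the corollary of Lemma \ref{key 1}, which supplies
$$\bm q_b := \matca{\nu(F_1)(b)}{\vdots}{\nu(F_n)(b)} \in \mathbf V(\mathbf E(X_{\bm F})_{\mathbb B})$$
for every $b \in B$, where $X_{\bm F}$ is the (at most) 1-dimensional fan whose rays are the $\rho_a = \mathbb R_{\geq 0}\bm d_a$ with $\bm d_a = (F_1(a),\ldots,F_n(a))^T$. I would then note that $|X_{\bm F}|$ is a nonempty closed subset of $\mathbb R^n$ stable under nonnegative scaling, hence satisfies condition $(*)$, so Corollary \ref{result 2} identifies the congruence variety with the support itself: $\mathbf V(\mathbf E(X_{\bm F})_{\mathbb B}) = |X_{\bm F}|$. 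Combining these gives $\bm q_b \in |X_{\bm F}|$, so there exist $a \in A$ and $t \geq 0$ with $\bm q_b = t\bm d_a$ (the origin case is absorbed by taking $t = 0$ for any $a$).

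To finish I would check that $t$ may be taken in $\mathbb Q_{\geq 0}$: each $F_i$ lies in $R^{\times} \subset \mathbb Z^A_0$ and so has integer components, and because $\nu$ is a semiring homomorphism it preserves units, placing $\nu(F_i)$ in $\mathbb Z^B_0$; thus every coordinate of both $\bm q_b$ and $\bm d_a$ is an integer, forcing $t \in \mathbb Q_{\geq 0}$ whenever $\bm d_a \neq \mathbf 0$. The resulting identity $\nu(F_i)(b) = tF_i(a)$ for all $i$ is precisely the defining condition for $\nu$ to be geometric with respect to $\{F_1,\ldots,F_n\}$, completing the argument. No real obstacle remains at this assembly stage; the genuine difficulty of the paper is packaged inside Lemma \ref{key 2}, which produces the Boolean Laurent polynomial functions separating a prescribed direction from the complement of an open cone neighborhood and thereby drives Corollary \ref{result 2}.
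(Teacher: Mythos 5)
Your proposal is correct and follows the same route as the paper: invoke the corollary of Lemma \ref{key 1} to place each vector $(\nu(F_1)(b),\ldots,\nu(F_n)(b))^T$ in $\mathbf V(\mathbf E(X_{\bm F})_{\mathbb B})$, apply Corollary \ref{result 2} to conclude this congruence variety equals $|X_{\bm F}|$, and read off the geometric condition. The only (welcome) extra care you take is the explicit integrality argument showing that the scalar $t$ is automatically rational --- a point the paper passes over when it asserts the ``equivalence'' just before Lemma \ref{result 1}.
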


\begin{proof}
  It follows from Lemma \ref{result 1} and Corollary \ref{result 2}.
\end{proof}

%As we said in Section \ref{review ito2022local},

Also we have shown that the functor $\Phi$ (see Section \ref{review ito2022local}) is fully faithful.

\begin{cor}
  \label{ff}
  The functor $\Phi$ is fully faithful.
\end{cor}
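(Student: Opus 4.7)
The plan is to unpack the definitions and assemble the pieces that have already been assembled in the excerpt. Faithfulness of $\Phi$ is not new to this paper—it is stated to have been proved in \cite{ito2022local}—so the only task is fullness, and the entire machinery for that has been prepared.

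First I would fix a morphism $\nu : \varphi_Y \to \varphi_X$ between two objects $\varphi_Y : \Bypmf \to \ZYposu$ and $\varphi_X : \Bxpmf \to \ZXposu$ coming from 1-dimensional tropical fans $Y \subset \mathbb R^m$ and $X \subset \mathbb R^n$. By definition $\nu$ is a semiring homomorphism $\Im(\varphi_Y) \to \Im(\varphi_X)$. The image $\Im(\varphi_Y)$ is Laurent-generated inside $\ZYposu$ by $\{\overline{y_1}, \ldots, \overline{y_m}\}$ (as noted in the paragraph following Conjecture \ref{conj}), so the composition with the inclusion $\Im(\varphi_X) \hookrightarrow \mathbb Z^{X(1)}_{\mathrm{pos}} \cup \{-\infty\}$ fits the hypotheses of Theorem \ref{main}.

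Next I would apply Theorem \ref{main} to conclude that this composition is geometric. Using the observation recorded just after the definition of \emph{geometric with respect to} $S$—namely that geometricity of $\nu$ and of its composition with the target's inclusion into $\ZXposu$ are equivalent—$\nu$ itself is geometric. Finally I would invoke Proposition \ref{full geom}, whose $(2) \Rightarrow (1)$ direction says exactly that a geometric morphism lies in the image of $\Phi$; this produces a morphism $\mu : X \to Y$ of tropical fans with $\Phi(\mu) = \nu$, establishing fullness.

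No step here is a genuine obstacle, since each link in the chain has been set up earlier: Theorem \ref{main} handles the hard geometric content, Proposition \ref{full geom} handles the translation back to morphisms of fans, and the Laurent-generatedness of $\Im(\varphi_Y)$ is built into the weighted evaluation map. The only care needed is bookkeeping: making sure the domain/codomain of $\nu$ match the framework of Theorem \ref{main} (which requires a Laurent-generated source and any $\ZBposu$-type target, both of which hold here) and citing the right direction of Proposition \ref{full geom}. The proof can therefore be written in a few lines, essentially as \emph{``combine Theorem \ref{main} with Proposition \ref{full geom} and faithfulness from \cite{ito2022local}.''}
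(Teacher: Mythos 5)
Your proposal is correct and follows exactly the paper's own route: faithfulness is imported from \cite{ito2022local}, and fullness is obtained by combining Theorem \ref{main} (any such homomorphism is geometric, applied via the Laurent-generatedness of $\Im(\varphi_Y)$) with the $(2)\Rightarrow(1)$ direction of Proposition \ref{full geom}. You have merely unpacked the bookkeeping that the paper leaves implicit in the paragraph following Conjecture \ref{conj}.
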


\begin{proof}
  The fact that $\Phi$ is faithful is shown in \cite[Proposition 6.5]{ito2022local}.
  By Proposition \ref{full geom} and Theorem \ref{main}, $\Phi$ is full.
\end{proof}

%\begin{proof}
%  Let $\{ F_1, \ldots, F_n \}$ be a Laurent-generating set of $R$.
%  Then, by Lemma \ref{L-gen image}, $R$ is the image of the semiring homomorphism $\varphi : \Bxpm = \mathbb B[x_1^{\pm}, \ldots, x_n^{\pm}] \to R$, \ $\varphi(P) = P(F_1, \ldots, F_n)$.
%  For a pair $(P,Q)$ in $\Bxpm$, $(P,Q) \in \Ker \varphi$ if and only if $P(F_1, \ldots, F_n) = Q(F_1, \ldots, F_n)$, which means that
%  $$P(F_1, \ldots, F_n)(a) = Q(F_1, \ldots, F_n)(a)$$
%  for any $a \in A$.
%  By Lemma \ref{elementwise}, it is equivalent to
%  $$P(F_1(a), \ldots, F_n(a)) = Q(F_1(a), \ldots, F_n(a))$$
%  for any $a \in A$.
%  Now, for each $a \in A$, consider the following vector:
%  $$\bm p_a := \begin{pmatrix}
%    F_1(a) \\ \vdots \\ F_n(a)
%  \end{pmatrix}.$$
%  Thus $(P,Q) \in Ker \varphi$ if and only if $P(\bm p_a) = Q(\bm p_a)$ for any $a \in A$.
%\end{proof}

\section{Finding homomorphisms}

Let $A,B$ be finite sets.
As an application of our main theorem, we give a way to find all the semiring homomorphisms between given Laurent-generated subsemirings $R, R'$ of $\ZAposu$ and $\ZBposu$ respectively.
By Corollary \ref{ff}, this means that we can also find all the morphisms between 1-dimensional tropical fans.

We first see the case that $R' = \ZBposu$, which is easier than the other cases.
The result of this case will be used in the other cases.
%This is also a preparation for the other cases.

We need one more proposition.

\begin{prop}
  \label{exist}
  Let $R$ be a Laurent-generated subsemiring of $\ZAposu$, and let $\{ F_1, \ldots, F_n \}$ be a Laurent-generating set of $R$.
  Assume that the elements $G_1, \ldots, G_n \in \mathbb Z^B_0$ satisfy the following condition: for any $b \in B$, there exist some $a \in A$ and a nonnegative rational number $t$ such that
  $$\begin{pmatrix} G_1(b) \\ \vdots \\ G_n(b) \end{pmatrix} = t \begin{pmatrix} F_1(a) \\ \vdots \\ F_n(a) \end{pmatrix}.$$
  Then there exists a unique semiring homomorphism $\nu : R \to \ZBposu$ such that $\nu(F_i) = G_i$ for any $i$.
\end{prop}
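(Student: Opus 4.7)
The plan is to produce $\nu$ by factoring a suitable evaluation map through another one, in exactly the way Lemma \ref{key 1} invites. Uniqueness is immediate: $R$ is Laurent-generated by $\{F_1, \ldots, F_n\}$, so any semiring homomorphism on $R$ is determined by its values on these generators (and their inverses, where inverses are forced because $\nu$ must take units to units).

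For existence, I introduce two evaluation homomorphisms out of $\Bxpmf$. First, the map $\varphi_F : \Bxpmf \to \ZAposu$, $f \mapsto f(F_1, \ldots, F_n)$, which by Lemma \ref{key 1} has image $R$ and kernel $\mathbf E(X_{\bm F})_{\mathbb B}$. Second, since each $G_i$ lies in $\mathbb Z^B_0 = (\ZBposu)^{\times}$, Lemma \ref{unit group} gives a well-defined semiring homomorphism $\varphi_G : \Bxpmf \to \ZBposu$, $f \mapsto f(G_1, \ldots, G_n)$. If we can show $\Ker(\varphi_F) \subseteq \Ker(\varphi_G)$, then the rule $\nu(\varphi_F(f)) := \varphi_G(f)$ produces a well-defined semiring homomorphism $\nu : R \to \ZBposu$, and by construction $\nu(F_i) = \nu(\varphi_F(\overline{x_i})) = \varphi_G(\overline{x_i}) = G_i$.

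The containment of kernels is the one content-bearing step, but the hypothesis is tailored for it. Exactly as in the proof of Lemma \ref{key 1}, applying Lemma \ref{elementwise} gives $\Ker(\varphi_G) = \mathbf E(X_{\bm G})_{\mathbb B}$ where $X_{\bm G}$ is the (at most 1-dimensional) fan built from the vectors $\bm e_b := (G_1(b), \ldots, G_n(b))^{T}$ for $b \in B$. The assumption on the $G_i$'s says precisely that each $\bm e_b$ is a nonnegative scalar multiple of some $\bm d_a$, so $\bm e_b \in |X_{\bm F}|$; since $|X_{\bm F}|$ is stable under nonnegative scaling, $|X_{\bm G}| = \mathbb R_{\geq 0}\{\bm e_b \mid b \in B\} \subseteq |X_{\bm F}|$. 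The inclusion of supports reverses to give $\mathbf E(X_{\bm F})_{\mathbb B} \subseteq \mathbf E(X_{\bm G})_{\mathbb B}$, i.e.\ $\Ker(\varphi_F) \subseteq \Ker(\varphi_G)$, which is what was needed.

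There is no real obstacle here: the statement is the converse direction of the main theorem, and once the two evaluation maps are in place the argument is essentially the first isomorphism theorem together with the already-established identification of $\Ker(\varphi_F)$ with $\mathbf E(X_{\bm F})_{\mathbb B}$. The only mild care required is to note that $\varphi_G$ lands in $\ZBposu$, which is guaranteed because its generating values $G_1, \ldots, G_n$ are units in $\ZBposu$.
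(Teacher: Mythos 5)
Your proof is correct and takes essentially the same route as the paper: both identify $\Ker(\varphi_F) = \mathbf E(X_{\bm F})_{\mathbb B}$ and $\Ker(\varphi_G) = \mathbf E(X_{\bm G})_{\mathbb B}$ via Lemma \ref{key 1}, deduce $|X_{\bm G}| \subset |X_{\bm F}|$ from the hypothesis, reverse the inclusion to get the containment of congruences, and factor $\varphi_G$ through $\varphi_F$. The paper writes this as an explicit chain of quotient isomorphisms while you phrase it as a direct well-definedness check, but the content is identical.
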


\begin{proof}
  The uniqueness is clear since $R$ is Laurent-generated by $F_1, \ldots, F_n$.
  To show the existence, we use Lemma \ref{key 1} again.
  Let $\varphi : \Bxpmf \to \ZAposu$ be the semiring homomorphism defined as $\varphi(f) = f(F_1, \ldots, F_n)$.
  Then $\Im(\varphi) = R$ and $\Ker(\varphi) = \mathbf E(X_{\bm F})_{\mathbb B}$, where $\bm F = (F_1, \ldots, F_n)$.
  Hence $\varphi$ induces an isomorphism
  $$\overline {\varphi} : \Bxpmf / \mathbf E(X_{\bm F})_{\mathbb B} \to R.$$
  Also let $R'$ be the subsemiring of $\ZBposu$ Laurent-generated by $\{ G_1, \ldots, G_n \}$, and let $\psi : \Bxpmf \to \ZBposu$ be the semiring homomorphism defined as $\psi(f) = f(G_1, \ldots, G_n)$.
  %Then $\Im(\psi) = R'$ and $\Ker(\psi) = \mathbf E(X_{\bm G})_{\mathbb B}$, where $\bm G = (G_1, \ldots, G_n)$.
  Then $\psi$ induces an isomorphism
  $$\overline {\psi} : \Bxpmf / \mathbf E(X_{\bm G})_{\mathbb B} \to R',$$
  where $\bm G = (G_1, \ldots, G_n)$.

  By the assumption, we have $|X_{\bm G}| \subset |X_{\bm F}|$.
  This leads the inclusion $\mathbf E(X_{\bm G})_{\mathbb B} \supset \mathbf E(X_{\bm F})_{\mathbb B}$.
  Hence there is the canonical surjection
  $$\mu : \Bxpmf / \mathbf E(X_{\bm F})_{\mathbb B} \to \Bxpmf / \mathbf E(X_{\bm G})_{\mathbb B}.$$
  The composition
  $$R \xrightarrow{{\overline{\varphi}}^{-1}} \Bxpmf / \mathbf E(X_{\bm F})_{\mathbb B} \xrightarrow{\mu} \Bxpmf / \mathbf E(X_{\bm G})_{\mathbb B} \xrightarrow{\overline{\psi}} R' \hookrightarrow \ZBposu$$
  is the desired map.
\end{proof}

\begin{cor}
  \label{bij}
  Let $R$ be a Laurent-generated subsemiring of $\ZAposu$, and let $\{ F_1, \ldots, F_n \}$ be a Laurent-generating set of $R$.
  Consider the following two sets:
  $$\mathcal A := \{ \nu \ | \ \nu \text{ is a semiring homomorphism from $R$ to $\ZBposu$ } \},$$
  $$\mathcal B := \left\{ (G_1, \ldots, G_n) \in (\mathbb Z^B_0)^n  \ \middle| \ \begin{aligned}
    &\text{for any $b \in B$, there exist} \\
    &\text{some $a \in A$ and a nonnegative} \\
    &\text{rational number $t$ such that} \\
    &\text{$G_i(b) = tF_i(a)$ for any $i$}.
   \end{aligned} \right\}.$$
  Then the map
  $$\mathcal A \to \mathcal B, \qquad \nu \mapsto (\nu(F_1), \ldots, \nu(F_n))$$
  is bijective.
\end{cor}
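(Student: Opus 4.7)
The plan is to verify the three standard pieces: the map lands in $\mathcal{B}$, it is injective, and it is surjective. All three follow quickly from results already established in the paper, so there is no substantial new work to carry out.

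For well-definedness, start from $\nu \in \mathcal{A}$. Each $F_i$ lies in $\mathbb{Z}^A_0 = (\ZAposu)^{\times}$, hence is a unit of $R$, and a semiring homomorphism carries units to units, so $\nu(F_i) \in (\ZBposu)^{\times} = \mathbb{Z}^B_0$. By the main theorem (Theorem \ref{main}), $\nu$ is geometric, so by the equivalence recorded just before the definition of ``geometric,'' $\nu$ is geometric with respect to the Laurent-generating set $\{F_1, \ldots, F_n\}$. Unpacking the definition, this is exactly the condition defining $\mathcal{B}$: for every $b \in B$ there exist $a \in A$ and a nonnegative rational $t$ with $\nu(F_i)(b) = tF_i(a)$ for all $i$.

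For injectivity, suppose $\nu, \nu' \in \mathcal{A}$ satisfy $\nu(F_i) = \nu'(F_i)$ for each $i$. Every element of $R$ can be written as a finite sum of finite products of the $F_i$ and their inverses, and a semiring homomorphism commutes with addition, multiplication, and with taking inverses of units; hence $\nu$ and $\nu'$ agree on all of $R$. Surjectivity is exactly the content of Proposition \ref{exist}: any tuple in $\mathcal{B}$ is realized as $(\nu(F_1), \ldots, \nu(F_n))$ for some (necessarily unique) $\nu \in \mathcal{A}$.

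The corollary therefore has no real obstacle of its own; the ``hard part'' is entirely contained in Theorem \ref{main}, which supplies surjectivity-toward-$\mathcal{B}$ via the geometric property, and in Proposition \ref{exist}, which supplies the reverse construction. The proof is just the packaging of these two results into a single bijection.
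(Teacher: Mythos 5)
Your proof is correct and follows the same structure as the paper's: well-definedness from Theorem \ref{main} (via the geometric property applied to the Laurent-generating set), injectivity from $\{F_1,\ldots,F_n\}$ generating $R$, and surjectivity from Proposition \ref{exist}. You merely spell out a bit more detail, such as the observation that units map to units so that $\nu(F_i)$ lands in $\mathbb{Z}^B_0$, which the paper leaves implicit.
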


\begin{proof}
  The map is well-defined by Theorem \ref{main}.
  The injectivity follows from the assumption that $\{ F_1, \ldots, F_n\}$ is a Laurent-generating set of $R$.
  The surjectivity follows from Proposition \ref{exist}.
\end{proof}

\begin{rem}
  \label{identify}
  If $A = \{ 1, \ldots, r \}$, an element of $\ZApos$ can be considered as a $r$-tuple $(a_1, \ldots, a_r)$ of integers such that $a_1+ \cdots +a_n \geq 0$.
  Let $\bm F=(F_1, \ldots, F_n)$ be an $n$-tuple of invertible elements in $\ZAposu$.
  The column vector
  $$\begin{pmatrix}
    F_1 \\ \vdots \\ F_n
  \end{pmatrix}$$
  can be identified with the $n \times r$ matrix
  $$M_{\bm F} := \begin{pmatrix}
    F_1(1) & \cdots & F_1(r) \\
    \vdots & & \vdots \\
    F_n(1) & \cdots & F_n(r)
  \end{pmatrix}.$$
  %whose sum of each row is $0$.
  
  Assume that $B = \{ 1, \ldots, s \}$.
  The set $\mathcal B$ in Corollary \ref{bij} is identified with the set
  $$\left\{ M \ \middle| \ \begin{aligned}
    &\text{ $M$ is an $n \times s$ integer matrix whose sum of each row is 0 and} \\
    &\text{ each column is a nonnegative scaler multiple of a column of $M_{\bm F}$ } \end{aligned} \right\}.$$
\end{rem}

\begin{ex}
  \label{ex1}
  Let $A = \{1,2,3,4,5\}$ and $B=\{1,2,3\}$.
  We use the identification in Remark \ref{identify}.

  Let
  $$F_1 := (1,-1,0,0,0), \quad F_2 := (0,0,1,-1,0), \quad F_3 := (1,1,1,1,-4),$$
  which are invertible elements in $\ZAposu$.
  Let $R$ be the subsemiring of $\ZAposu$ Laurent-generated by $\{ F_1, F_2, F_3 \}$.
  Let
  $$M_{\bm F} = \begin{pmatrix}
    1 & -1 & 0 & 0 & 0 \\
    0 & 0 & 1 & -1 & 0 \\
    1 & 1 & 1 & 1 & -4 \\
  \end{pmatrix}.$$
  
  Let us find all semiring homomorphisms from $R$ to $\ZBposu$.
  By Corollary \ref{bij} and Remark \ref{identify}, it is equivalent to find all $3 \times 3$ integer matrices such that
  \begin{enumerate}[(1)]
    \item the sum of each row is 0, and
    \item each column is a nonnegative scaler multiple of a column of $M_{\bm F}$.
  \end{enumerate}
  The zero matrix $O$ is one of such matrices.
  Let $M$ be another matrix.
  Since the sum of the third row of $M$ is 0, one of the columns of $M$ is a positive scalar multiple of $\begin{pmatrix}
    0 \\ 0 \\ -4
  \end{pmatrix}$, and another one is a positive scalar multiple of $\begin{pmatrix}
    1 \\ 0 \\ 1
  \end{pmatrix}, \begin{pmatrix}
    -1 \\ 0 \\ 1
  \end{pmatrix}, \begin{pmatrix}
    0 \\ 1 \\ 1
  \end{pmatrix},$ or $\begin{pmatrix}
    0 \\ -1 \\ 1
  \end{pmatrix}$.
  If one of the column of $M$ is a positive scalar multiple of $\begin{pmatrix} 1 \\ 0 \\ 1 \end{pmatrix}$, since the sum of the first row of $M$ is 0, one of the column of $M$ is a positive scalar multiple of $\begin{pmatrix} -1 \\ 0 \\ 1 \end{pmatrix}$.
  Hence $M$ is of the form.
  $$t\begin{pmatrix} 1 & -1 & 0 \\ 0 & 0 & 0 \\ 1 & 1 & -2 \end{pmatrix}, \qquad t \in \mathbb Z_{>0}$$
  or the matrix obtained by permutating the columns of it.
  The other cases are similar.
  Therefore, $M$ is one of the following forms:
  $$O, \qquad t\begin{pmatrix} 1 & -1 & 0 \\ 0 & 0 & 0 \\ 1 & 1 & -2 \end{pmatrix}, \qquad t\begin{pmatrix} 0 & 0 & 0 \\ 1 & -1 & 0 \\ 1 & 1 & -2 \end{pmatrix}, \qquad t \in \mathbb Z_{>0}$$
  or the matrices obtained by permutating the columns of them.
  %The set of semiring homomorphisms from $R$ to $\ZBposu$ consists of the homomorphisms corresponding to those matrices.
  For a matrix $M$ in the above list, the semiring homomorphism $\nu$ corresponding to $M$ is the unique one which satisfies that $\nu(F_i)$ is the $i$-th row of $M$ for $i=1,2,3$.
  For example, the semiring homomorphism $\nu$ corresponding to the matrix $\matcc 1{-1}000011{-2}$ is the unique one which satisfies $\nu(F_1) = (1, -1, 0), \ \nu(F_2) = (0, 0, 0)$, and $\nu(F_3) = (1, 1, -2)$.
%  It is interesting that the set of those matrices forms a 1-dimensional tropical fan in $\mathbb R^{3 \times 3}$.
\end{ex}

We now consider the case that $R'$ is an arbitrary Laurent-generated subsemiring of $\ZBposu$.
A semiring homomorphism $\nu : R \to R'$ is just a semiring homomorphism $\nu : R \to \ZBposu$ whose image is in $R'$.
Let $\{ F_1, \ldots, F_n \}$ be a Laurent-generating set of $R$.
Then $\Im(\nu) \subset R'$ if and only if $\nu(F_i) \in R'$ for any $i$.
Let $M$ be the matrix corresponding to $\nu$ via the identification in Remark \ref{identify}.
Recall that the rows of $M$ are $\nu(F_1), \ldots, \nu(F_n)$.
Hence the set of matrices corresponding to semiring homomorphisms from $R$ to $R'$ is
$$\left\{ M \ \middle| \ \begin{aligned}
  &\text{ $M$ is an $n \times |B|$ integer matrix whose sum of each row is 0,} \\
  &\text{ each column is a nonnegative scaler multiple of a column of $M_{\bm F}$,} \\
  &\text{ and each row is in $R'$}\end{aligned} \right\}.$$

\begin{ex}
  \label{ex3}
  We use the same notations to Example \ref{ex1}.
  Let
  $$G_1 := (1, -2, 1), \qquad G_2 := (1,2,-3),$$
  which are invertible elements in $\ZBposu$.
  Let $R'$ be the subsemiring of $\ZBposu$ Laurent-generated by $\{ G_1, G_2 \}$.

  We find all semiring homomorphisms from $R$ to $R'$.
  %Since $R'$ is a subsemiring of $\ZBposu$, it is also equivalent to finding all semiring homomorphisms from $R$ to $\ZBposu$ whose image is in $R'$.
  By the above argument, it is equivalent to finding all $3 \times 3$ matrices such that
  \begin{enumerate}[(1)]
    \item the sum of each row is 0,
    \item each row is a nonnegative scalar multiple of a column of $M_{\bm F}$, and
    \item each row is in $R'$.
  \end{enumerate}

  The condition \enquote{$(1)$ and $(2)$} is equivalent to that the matrix is one of the matrices found in Example \ref{ex1}.
  The condition \enquote{$(1)$ and $(3)$} is equivalent to that each row is in $R' \cap \mathbb Z^B_0$.  

  %Let $M$ be one of the matrices we found in Example \ref{ex1}, and let $\nu$ be the corresponding semiring homomorphism.
  %Recall that $R$ is Laurent-generated by $\{ F_1, F_2, F_3 \}$ and that $\nu(F_i)$ is the $i$-th row of $M$ for each $i$.
  %Hence $\Im(\nu)$ is Laurent-generated by the rows of $M$.
  %Therefore, $\Im(\nu)$ is in $R'$ if and only if the rows of $M$ are in $R'$. 
  %Recall that the sum of each row of $M$ is 0.
  %Thus each rows of $M$ is in $R'$ if and only if it is in $R' \cap \mathbb Z^B_0$.

  We claim that for any $a,b,c \in \mathbb Z$, $(a,b,c) \in R' \cap \mathbb Z^B_0$ if and only if
  \begin{enumerate}[(i)]
    \item $a+b+c=0$, and
    \item $a \equiv c \mod 4$.
  \end{enumerate}
  Indeed, by Lemma \ref{unit group}, $R' \cap \mathbb Z^B_0$ is the subgroup of $\mathbb Z^B_0$ generated by $\{ G_1,G_2 \}$.
  Since $G_1$ and $G_2$ satisfy (i) and (ii), any element of $R' \cap \mathbb Z^B_0$ satisfies (i) and (ii).
  Conversely, if $(a,b,c) \in \mathbb Z^3$ satisfies (i) and (ii),
  $$\begin{aligned}
    (a,b,c) &= (a,-a-c, c) \\
    &= a(1,-2,1) + (0, a-c, -a+c) \\
    &= a(1, -2, 1) + \frac {a-c}4 (0, 4, -4).
  \end{aligned}$$
  Note that $(0,4,-4) = (1, 2, -3) - (1, -2, 1) \in R' \cap \mathbb Z^B_0$.
  Hence $(a,b,c) \in R' \cap \mathbb Z^B_0$.

  Now, our purpose is finding matrices whose each row satisfies (i) and (ii) among the matrices found in Example \ref{ex1}.
%  Note that $(i)$ is always satisfied.
  By computing, we obtain the following list of matrices we are looking for:
  $$4t \matcc{1}{-1}{0}{0}{0}{0}{1}{1}{-2}, \qquad 4t \matcc{0}{0}{0}{1}{-1}{0}{1}{1}{-2}, \qquad t \in \mathbb Z_{>0},$$
  the matrices obtained by permutating the columns of them, 
  $$(4t-2) \matcc{1}{0}{-1}{0}{0}{0}{1}{-2}{1}, \qquad (4t-2) \matcc{0}{0}{0}{1}{0}{-1}{1}{-2}{1}, \qquad t \in \mathbb Z_{>0},$$
  the matrices obtained by exchanging the first and third columns of them, and $O$.
\end{ex}

Finally, we see a way to find all the morphisms between given 1-dimensional tropical fans.
Let $X, Y$ be 1-dimensional tropical fans in $\mathbb R^n$, $\mathbb R^m$ respectively.
Let $\varphi_X : \Bxpmf \to \ZXposu$ and $\varphi_Y : \Bypmf \to \ZYposu$ be the weighted evaluation maps of them respectively (see Section \ref{review ito2022local} for the definition).
By Corollary \ref{ff}, for any semiring homomorphism $\nu : \Im(\varphi_X) \to \Im(\varphi_Y)$, there exists a unique morphism $\mu : Y \to X$ such that $\Phi(\mu) = \nu$.
The way to construct $\mu$ was given in the proof of \cite[Proposition 6.8]{ito2022local}.
Explicitly, let $F_i := \varphi_X(\overline{x_i})$ and $G_j := \varphi_Y(\overline{y_j})$ for any $i,j$.
Then each $\nu(F_i)$ is in the group generated by $\{ G_1, \ldots, G_m \}$.
Hence there exist an $n \times m$ integer matrix $T$ such that
$$\matca {\nu(F_1)}{\vdots}{\nu(F_n)} = T \matca {G_1}{\vdots}{G_m}.$$
Thus the morphism $\mu$ is the restriction of the linear map $\mathbb R^m \to \mathbb R^n$ defined by $T$.

\begin{ex}
  We use the same notations to Example \ref{ex1} and Example \ref{ex3}.

  Let $\rho_1, \ldots, \rho_5$ be the rays in $\mathbb R^3$ spanned by
  $$\bm e_1=\matca 101, \ \bm e_2 = \matca {-1}01, \ \bm e_3 = \matca 011, \ \bm e_4 = \matca 0{-1}1, \ \bm e_5 = \matca 00{-1}$$
  respectively.
  Let $X = \{ \{ \mathbf 0 \}, \rho_1, \ldots, \rho_5 \}$.
  We give the weight function as
  $$\omega_X(\rho_1) = \omega_X(\rho_2) = \omega_X(\rho_3) = \omega_X(\rho_4) =1, \quad \omega_X(\rho_5) = 4.$$
  Then $X = (X, \omega_X)$ is a 1-dimensional tropical fan.
  Let $\varphi_X : \Bxpmf \to \ZXposu$ be the weighted evaluation map of $X$.
  Then
  $$\varphi_X(\overline{x_1}) = (1,-1,0,0,0) = F_1,$$
  $$\varphi_X(\overline{x_2}) = (0,0,1,-1,0) = F_2,$$
  $$\varphi_X(\overline{x_3}) = (1,1,1,1,-4) = F_3,$$
  where we identify $X(1) = \{\rho_1, \ldots, \rho_5\}$ with $A=\{1,2,3,4,5\}$.
  Hence $\Im(\varphi_X)$ coincides with the semiring $R$ in Example \ref{ex1}. 

  Also, let $\rho'_1, \rho'_2 , \rho'_3$ be the rays in $\mathbb R^2$ spanned by
  $$\bm e'_1=\matba 11, \ \bm e'_2 = \matba {-1}1, \ \bm e'_3 = \matba 1{-3}$$
  respectively.
  Let $Y = \{ \{ \mathbf 0 \}, \rho'_1, \rho'_2, \rho'_3 \}$.
  We give the weight function as
  $$\omega_Y(\rho'_1) = 1, \quad \omega_Y(\rho'_2) = 2, \quad \omega_Y(\rho'_3) = 1.$$
  Then $Y = (Y, \omega_Y)$ is a 1-dimensional tropical fan.
  Let $\varphi_Y : \mathbb B[\bm y^{\pm}]_{\mathrm{fcn}} \to \ZYposu$ be the weighted evaluation map of $Y$.
  Then
  $$\varphi_Y(\overline{y_1}) = (1, -2, 1) = G_1,$$
  $$\varphi_Y(\overline{y_2}) = (1,2,-3) = G_2,$$
  where we identify $Y(1) = \{\rho'_1, \rho'_2, \rho'_3\}$ with $B=\{1,2,3\}$.
  Hence $\Im(\varphi_Y)$ coincides with the semiring $R'$ in Example \ref{ex3}.

  We found all the semiring homomorphisms from $R$ to $R'$ in Example \ref{ex3}.
  This gives the complete list of morphisms from $Y$ to $X$.
  For example, we determine the morphism corresponding to the matrix
  $$M = 4\matcc 1{-1}000011{-2}.$$
  Let $\nu : R \to R'$ be the semiring homomorphism corresponding to $M$.
  Recall that $\nu(F_i)$ is the $i$-th row of $M$ for $i=1,2,3$.
  Thus we have
  $$\matca{\nu(F_1)}{\nu(F_2)}{\nu(F_3)} = \matcc 4{-4}000044{-8} = \matcb 310013 \matbc 1{-2}112{-3}.$$
  Then the corresponding morphism from $Y$ to $X$ is defined by the matrix $\matcb 310013$.
  By the similar computations, we obtain the following list of all matrices which define morphisms from $Y$ to $X$:
  $$t\matcb 310013, \quad t\matcb {-3}{-1}0013, \quad t \matcb {-1}100{-5}{-3}, \quad t\matcb 1{-1}00{-5}{-3},$$
  $$t \matcb 110020, \quad t \matcb {-1}{-1}0020$$
  for $t \in \mathbb Z_{>0}$, the matrices obtained by exchanging the first and second rows of them, and $O$.
\end{ex}

%\printbibliography

%\bibliography{bib}
%\bibliographystyle{plain}

%\begin{thebibliography}{99}
%  \bibitem{AR} First steps in tropical intersection theory
%  \bibitem{BE}
%  \bibitem{GG} The universal tropicalization and the Berkovich analytification
%  \bibitem{ito2022local}
%  \bibitem{JM}
%  \bibitem{S}
%\end{thebibliography}

%\printbibitembibliography

\end{document}